\theoremstyle{plain}
\newtheorem{thm}{Theorem}[subsection]
\newtheorem{cor}[thm]{Corollary}
\newtheorem{lem}[thm]{Lemma}
\newtheorem{prop}[thm]{Proposition}
\theoremstyle{definition}
\newtheorem{defn}[thm]{Definition}
\newtheorem{ex}[thm]{Example}
\newcommand{\V}{\mathcal{V}}
\newcommand{\Irr}{\operatorname{Irr}}
\newcommand{\ev}{\operatorname{ev}}
\newcommand{\cx}{\operatorname{cx}}
\newcommand{\Ext}{\operatorname{Ext}}
\newcommand{\Hom}{\operatorname{Hom}}
\newcommand{\MaxSpec}{\operatorname{MaxSpec}}
\newcommand{\Ker}{\textrm{Ker}}
\newcommand{\ad}{\operatorname{ad}}
\newcommand{\gr}{\operatorname{gr}}
\newcommand{\0}{\bar 0}
\newcommand{\1}{\bar 1}
\newcommand{\HH}{\operatorname{H}}
\numberwithin{equation}{subsection}
\def\Z{{\mathbb Z}}
\def\:{\colon}
\newcommand{\fg}{\mathfrak{g}}
\def \fm{\mathfrak{m}}
\def \fg{\mathfrak{g}}
\def \fu{\mathfrak{u}}
\def\La{\mathfrak{g}}
\begin{document}

\title[Cohomology and support varieties for restricted  Lie superalgebras]
{Cohomology and support varieties for restricted  Lie superalgebras}\
\author{Irfan Bagci}
  \date{\today}

\address{Department of Mathematics \\
          University of California at  Riverside\\ Riverside, California 92521  }
\email{irfan@math.ucr.edu}

\subjclass[2000]{Primary 17B56, 17B10; Secondary 13A50}

\begin{abstract} Let $\fg $ be a restricted Lie superalgebra over an algebraically closed field $k$ of characteristic $p>2$. Let $\fu(\fg)$ denote the restricted enveloping algebra of $\fg$.
In this paper we prove that the cohomology ring $\HH^\bullet(\fu(\fg), k)$ is finitely generated.  This allows one
to define support varieties for finite dimensional $\fu(\fg)$-supermodules. We also show that support varieties for finite dimensional $\fu(\fg)$
supermodules satisfy the desirable properties of support variety theory.
\end{abstract}

\maketitle

\parskip=2pt
\section{Introduction}
\subsection{} Throughout this paper we  assume that $k$ is an algebraically closed field of characteristic
 $p>2$.  All unspecified vector spaces, homomorphisms and tensor products are taken over $k$.  All vector spaces are assumed to be finite dimensional unless otherwise noted. Recall that a superspace is a $\Z_2$-graded vector space.  For a superspace $V= V_{\0} \oplus V_{\1}$, $\deg{v}$ will denote the $\Z_2$-degree of a homogeneous element $v \in V$.

 A superalgebra is a $\Z_2$-graded, unital, associative algebra $A = A_{\0}\oplus A_{\1}$ satisfying $A_iA_j \subseteq A_{i+j}$ for all $i, j \in \Z_2$. A Lie superalgebra is a finite dimensional
superspace $\La=\La_{\0}\oplus \La_{\bar{1}}$ with a bracket
 $[- ,- ] : \La\otimes \La \rightarrow  \La$ which preserves the $\Z_2$-grading
and satisfies graded versions of the operations used to define Lie algebras.
 Since the bracket preserves the $\Z_2$-grading, the even part  $\La_{\0}$ is a Lie algebra under the restriction of the  bracket and  the odd part $\fg_{\1}$ is a $\La_{\0}$-module under the bracket action. We view a Lie algebra as a Lie superalgebra concentrated in degree $\0$.  Given a Lie superalgebra ${\mathfrak g}$, $U(\La)$ will denote the universal enveloping algebra of $\fg$. $U(\fg)$ is a superalgebra and satisfies a PBW type theorem. See, for example, \cite{Kac, Sch}  for details and further background on Lie superalgebras.

A Lie superalgebra $\fg = \fg_{\0} \oplus \fg_{\1}$ is called restricted if the even part $\fg_{\0}$ is a restricted Lie algebra, i.e., it has a $p$th power map $()^{[p]} : \fg_{\0} \rightarrow \fg_{\0}$,  and the odd part $\fg_{\1}$ is a restricted $\fg_{\0}$-module under the adjoint action. If $\fg$ is a restricted Lie superalgebra, then the restricted enveloping algebra is defined to be the quotient algebra $\fu(\fg)= U(\fg)/J$, where $J$ is the two-sided ideal in the universal enveloping algebra $U(\fg)$ generated by the set $\{x^p-x^{[p]} : x \in \fg_{\0}\}$. A $\fg$-supermodule $M = M_{\0} \oplus M_{\1}$ is called restricted if $M$ is a supermodule for $\fu(\fg)$. For further details for the representation theory of restricted  Lie algebras and restricted Lie  super algebras we refer the reader to \cite{FPa1,FPa2, FPa3, FPa4,FPa5, WZ} . 
\subsection{}
 \subsection{} This paper is organized as follows. In Section 2    we review basic facts about  restricted  Lie superalgebras and super Hopf algebras  and record
the properties we are going to need in the rest of the paper.

In Section 3 we study the cohomology of $\fu(\fg)$.  We show that there exists a first quadrant spectral sequence converging to  the cohomology of the $\fu(\fg)$. From this spectral sequence,  it easily follows that $\HH^{\ev}(\fu(\fg), M)$ is a finitely generated $\HH^{\ev}(\fu(\fg), k)$-supermodule for every finite dimensional $\fu(\fg)$-supermodule $M$.

In Section 4, by using the finite generation results of  Section 2, we define support varieties.  An important property in the theory of support varieties for finite groups  is the realizability of any conical variety as  support variety of a supermodule. We prove an analogous  realization theorem.

\section{Basic facts and Results for restricted Lie superalgebras}
\subsection{} We begin with some basic definitions.
\begin{defn}A Lie superalgebra $\fg = \fg_{\0} \oplus \fg_{\1}$ is said to be restricted if the bracket $[, ]: \fg \times \fg \rightarrow \fg$ is supplemented with an additional operation $( \ )^{[p]} : \fg_{\0} \rightarrow \fg_{\0}$ called restriction satisfying:
  \begin{itemize}
  \item[(a)] $(cx)^{[p]}= c^px^{[p]}$ for all $c \in k$ and $x \in \fg_{\0}$,
  \item[(b)] For all $x \in \fg_{\0}$,  $\ad x^{[p]} = (\ad x)^p$,
  \item[(c)] $(x+y)^{[p]}= x^{[p]}+y^{[p]}+\Sigma_{i=1}^{p-1}s_i(x,y)$ for all $x, y \in \fg_{\0}$ where $is_i$ is the coefficient of $t^{i-1}$ in $(\ad (tx+y))^{p-1}(x)$.
  \end{itemize}
  \end{defn}
In short, a restricted Lie superalgebra is a Lie superalgebra whose
even subalgebra is a restricted Lie algebra and the odd part is a restricted module by the adjoint action of the even subalgebra.

  \begin{defn}Let $\fg$ be a restricted Lie superalgebra. Let $U(\fg)$ be the universal enveloping algebra of $\fg$. The restricted enveloping algebra $\fu(\fg)$ of $\fg$  is the quotient algebra
  $$\fu(\fg)= U(\fg)/J,$$
  where $J$ is the two sided ideal of $U(\fg)$ generated by $x^p-x^{[p]}$, $x\in \fg_{\0}$.
  \end{defn}

  Let $x_1, \dots, x_m $ and $y_1, \dots, y_n$ be  bases for $\fg_{\0}$ and $\fg_{\1}$, respectively. Then the  set
$$\{ x_1^{a_1} \dots x_m^{a_m}y_1^{b_1} \dots y_n^{b_n} \mid 0\leq  a_i<p;  \ b_j = 0,1  \ \ \text{for all} \ \  i, j \}$$
  is a basis for $\fu(\fg)$. In particular, $\dim \fu(\fg)= p^{\dim \fg_{\0}} 2^{\dim \fg_{\1}}.$

  The category of all $\fg$-supermodules is identified in a natural way with the category of all $U(\fg)$-supermodules. This construction identifies the full subcategory of
  all restricted $\fg$-supermodules with that of all $\fu(\fg)$-supermodules.

  \subsection{Super Hopf algebra structure of $U(\fg)$ and $\fu(\fg)$} To define a   a $\Z_2$-graded Hopf algebra (or super-Hopf algebra) we begin first of all with a $\Z_2$-graded associative  algebra (or superalgebra) $A$. We then  consider the braided tensor product algebra $A \overline{\otimes} A$, where $\overline{\otimes}$ is the usual tensor product but with elements of odd degree skew-commuting. This allows us to equip  $A$  with a coproduct:
  $$\overline{\Delta} : A \rightarrow A\overline{\otimes} A$$
  which is a superalgebra homomorphism from $A$ to the braided tensor product algebra  $A\overline{\otimes} A$:
  $$\overline{\Delta}(ab)= \Sigma (-1)^{\deg{a_2} \ \deg{b_1}}a_1b_1\otimes a_2b_2=\overline{\Delta}(a) \overline{\Delta}(b)$$
  for any $a, b $ in $A$, where $\overline{\Delta}(a)= \Sigma a_1 \otimes a_2, \overline{\Delta}(b)= \Sigma b_1 \otimes b_2$ and $a_2, b_1$ homogeneous.
  We emphasize here that this is exactly the central point of difference between the super and the ordinary Hopf algebraic structure. In an ordinary Hopf algebra $H$,
  the coproduct $\Delta : H \rightarrow H \otimes H$ is  an algebra homomorphism from $H$ to the usual tensor product algebra $H\otimes H$.

  Similarly, $A$ is equipped with an antipode $\overline{S} : A \rightarrow A$ which is not an algebra anti-homomorphism (as it should be in an ordinary Hopf algebra) but  a superalgebra anti-homomorphism (or twisted anti-homomorphism, or braided anti-homomorphism) in the following sense
  $$\overline{S}(ab) = (-1)^{\deg{a} \deg{b}} \overline{S}(a) \overline{S}(b),$$
  for any homogeneous $a, b \in A$.

  The rest of the axioms which complete the super-Hopf algebraic structure (i.e., coassociativity, counit property and compatibility with the antipode) have the same formal descriptions as in ordinary Hopf algebras.

  $U(\fg)$ and $\fu(\fg)$ are super Hopf algebras with the following operations:
 $$ \overline{\Delta} : \fu(\fg)  \rightarrow \fu(\fg)\otimes \fu(\fg) ; \ \   \overline{\Delta}(x)= 1\otimes x + x\otimes 1, \ \  x \in \fg,$$
  $$ \varepsilon : \fu(\fg) \rightarrow k ; \ \  \varepsilon(x) =0 , x \in \fg  \ \  \text{and} \ \  \varepsilon(1) =1,$$
  $$ \overline{S} : \fu(\fg) \rightarrow \fu(\fg) ; \ \ \  \overline{S}(x) = -x , \ \ \  x \in \fg.$$

  The Hopf superalgebras $\fu(\fg)$ and  $U(\fg)$ are  super-cocommutative.

  \begin{prop} Let $H$ be a finite dimensional super-cocommutative super Hopf algebra over $k$ and let $M$ be a finite dimensional $H$-supermodule. Then $M$ is a summand of $M\otimes M^\ast \otimes M$. Therefore the following are equivalent:

 \begin{itemize}
  \item[(a)] $M$ is projective.
\item[(b)]$M\otimes M^\ast$ is projective.
 \item[(c)]$M\otimes M$ is projective.
 \item[(d)]$M^\ast$ is projective.
\item[(e)]$M$ is injective.

  \end{itemize}

  \end{prop}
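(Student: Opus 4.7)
The plan is to first establish that $M$ is a direct summand of $M\otimes M^\ast\otimes M$, and then to combine this with three standard Hopf-algebra facts---the projective-absorbing property of the tensor product, the super-cocommutative rearrangement of tensor factors, and the Frobenius property of a finite dimensional Hopf algebra---to deduce the five-way equivalence.

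For the summand statement, fix a homogeneous basis $\{m_i\}$ of $M$ with dual basis $\{m_i^\ast\}$, and introduce the coevaluation $\delta\colon k\to M\otimes M^\ast$, $1\mapsto\sum_i m_i\otimes m_i^\ast$, and the evaluation $\ev\colon M^\ast\otimes M\to k$, $f\otimes m\mapsto f(m)$. A direct calculation shows that the composition
$$ M\xrightarrow{\delta\otimes\mathrm{id}_M} M\otimes M^\ast\otimes M\xrightarrow{\mathrm{id}_M\otimes\ev} M $$
is the identity on $M$. The key content is that both $\delta$ and $\ev$ are $H$-supermodule homomorphisms, once $M^\ast$ is equipped with the natural dual action $(h\cdot f)(m)=(-1)^{\deg h\deg f}f(\overline{S}(h)\cdot m)$; this uses the antipode identity $\sum \overline{S}(h_{(1)})h_{(2)}=\varepsilon(h)1$ and its dual. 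The delicate point is keeping track of the Koszul signs introduced by the super-flip built into $\overline{\Delta}$ and $\overline{S}$, but this is formal.

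Next, tensoring a projective $H$-supermodule with any finite dimensional $H$-supermodule yields a projective. It suffices to treat $H\otimes N$: the map $h\otimes n\mapsto\sum h_{(1)}\otimes h_{(2)}\cdot n$ is an $H$-supermodule isomorphism from $H\otimes N_{\mathrm{triv}}$ (action on the first factor only) onto $H\otimes N$ (diagonal action), with inverse built from $\overline{S}$. Since the source is free, so is $H\otimes N$, and passing to summands shows $P\otimes N$ is projective whenever $P$ is. Super-cocommutativity of $H$ additionally makes the super-flip $m\otimes n\mapsto(-1)^{\deg m\deg n}n\otimes m$ into an $H$-supermodule isomorphism, so the three factors in $M\otimes M^\ast\otimes M$ can be permuted freely up to isomorphism. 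Finally, since $H$ is a finite dimensional super Hopf algebra, it is Frobenius as an associative algebra, so projective and injective $H$-supermodules coincide, and duality interchanges them.

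Assembling everything, (a)$\Leftrightarrow$(e) is the Frobenius equivalence, and (a)$\Leftrightarrow$(d) follows by combining Frobenius with duality. The implications (a)$\Rightarrow$(b) and (a)$\Rightarrow$(c) are immediate from the tensor-absorbing property. Conversely, if $M\otimes M^\ast$ is projective then so is $M\otimes M^\ast\otimes M$, and the summand statement forces $M$ to be projective, giving (b)$\Rightarrow$(a); for (c)$\Rightarrow$(a) the same argument applies after using super-cocommutativity to identify $M\otimes M^\ast\otimes M\cong(M\otimes M)\otimes M^\ast$. I expect the main obstacle to be nothing deep, only the careful super-sign bookkeeping when verifying the $H$-equivariance of $\delta$ and $\ev$; the remaining steps are direct transcriptions of well-known Hopf-algebra arguments into the super setting.
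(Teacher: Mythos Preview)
Your proof is correct and is precisely the argument the paper has in mind: the paper's own proof is a one-line reference to \cite[Proposition~3.1.10]{Ben1}, and what you have written is a faithful super-version of Benson's argument.

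One small organizational point: you invoke the Frobenius property of $H$ as an external input to obtain (a)$\Leftrightarrow$(e), but in the paper's logical order self-injectivity of $H$ is recorded as a \emph{corollary} of this very proposition. To avoid circularity, follow Benson more closely: first obtain (a)$\Leftrightarrow$(d) internally (apply the (a)$\Leftrightarrow$(b) equivalence to $M^\ast$, using $(M^\ast)^\ast\cong M$ and the cocommutative flip $M^\ast\otimes M\cong M\otimes M^\ast$), and then get (a)$\Leftrightarrow$(e) from (a)$\Leftrightarrow$(d) together with the elementary fact that $k$-linear duality interchanges projectives and injectives for finite dimensional modules over any finite dimensional algebra. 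With that adjustment your argument matches the referenced proof exactly.
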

  \begin{proof}This is argued exactly as in \cite[Proposition 3.1.10]{Ben1}.
 \end{proof}

  \begin{cor}\label{C:injective}
If $H$ is a finite dimensional super cocummutative super Hopf algebra over $k$, then $H$ is self injective. That is, a finite dimensional $H$-supermodule is projective if and only if it is injective. In particular, $\fu(\fg)$ is self injective.
\end{cor}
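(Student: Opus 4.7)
The plan is to deduce the corollary directly from Proposition 2.2 (the preceding proposition), which gives the equivalence of projectivity and injectivity for any finite dimensional $H$-supermodule under the stated hypotheses on $H$.

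First I would observe that $H$, viewed as a left $H$-supermodule via the regular representation, is free and hence projective. Applying the equivalence (a) $\Leftrightarrow$ (e) of Proposition 2.2 to $M = H$ shows that $H$ is injective as a left $H$-supermodule, which is exactly the statement that $H$ is self-injective. The second assertion — that a finite dimensional $H$-supermodule is projective if and only if it is injective — is then simply a restatement of (a) $\Leftrightarrow$ (e) of Proposition 2.2.

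For the final sentence, I would note that the earlier discussion shows $\fu(\fg)$ is a finite dimensional super-cocommutative super Hopf algebra (it has the coproduct, counit and antipode exhibited just before Proposition 2.2, and its finite dimensionality follows from the PBW-type basis giving $\dim \fu(\fg) = p^{\dim \fg_{\0}} 2^{\dim \fg_{\1}}$). Thus the hypotheses of the general statement are satisfied, and we conclude that $\fu(\fg)$ is self-injective.

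There is no real obstacle here; the entire content is packaged in Proposition 2.2, and the corollary is a formal consequence obtained by specializing to the regular representation. The only thing to be careful about is to apply the proposition to $H$ itself as a left supermodule over $H$, rather than to some other module.
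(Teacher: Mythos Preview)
Your proposal is correct and matches the paper's intent: the corollary is stated immediately after the proposition with no separate proof, so it is meant to follow exactly as you describe, by applying the equivalence (a)$\Leftrightarrow$(e) to the regular module $H$ and then specializing to $\fu(\fg)$. There is nothing to add.
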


 For another proof of self injectivity of $\fu(\fg)$ we refer the reader to  \cite{FPa3, WZ}.


  \section{Cohomology}

  \subsection{} Let $M$ be an $\fu(\fg)$-supermodule and
$$P_{\bullet} : \cdots \longrightarrow P_n\longrightarrow \cdots \longrightarrow P_0\longrightarrow M$$

 be a projective resolution of $M$. Then the cohomology of $\fu(\fg)$ with coefficients in $M$ is defined by
   $$\HH^i(\fu(\fg), M):= \Ext _{\fu(\fg)}^i(k, M) \ \ \ i \geq 0,$$
where $k$ is viewed as an $\fu(\fg)$-supermodule via the augmentation.

   We use the following notational convention
   $$\HH^{\bullet}(\fu(\fg), M): = \bigoplus_{i=0}^\infty \HH^{i}(\fu(\fg), M),$$
   $$\HH^{\ev}(\fu(\fg), M): = \bigoplus_{i=0}^\infty \HH^{2i}(\fu(\fg), M).$$
Since $\HH^{\bullet}(\fu(\fg), k)$ is graded commutative, $\HH^{\ev}(\fu(\fg), k)$ is a commutative $k$-algebra.

 A minimal projective resolution
$$P_{\bullet} : \cdots \longrightarrow P_n\longrightarrow \cdots \longrightarrow P_0\longrightarrow M$$ of $M$ is defined as follows:
We set $P_0 = P(M)$ the projective cover of $M$ and let $\Omega(M)$ be the kernel of the  surjective even supermodule homomorphism $P(M)\longrightarrow M$. Inductively set $P_n =P( \Omega^n(M))$ and $\Omega^n(M)=\Omega(\Omega^{n-1}(M))$.

  \begin{ex} Let $\fg = \mathfrak{gl}(1|1)$. $\fg$ consists of all $2\times 2$ matrices over $k$. The even subalgebra $\fg_{\0}$ is generated by
 \[
 E_{11} =
 \begin{bmatrix}
 1& 0 \\
  0 & 0
\end{bmatrix}, \ \ \ \
E_{22} =
 \begin{bmatrix}
 0& 0 \\
  0 & 1
\end{bmatrix}.
 \]
 The odd part $\fg_{\1}$ is spanned by
 \[
 E_{12} =
 \begin{bmatrix}
 0& 1 \\
  0 & 0
\end{bmatrix}, \ \ \ \
E_{21} =
 \begin{bmatrix}
 0& 0 \\
  1 & 0
\end{bmatrix}.
 \]

   The irreducible supermodules in the principal block are one dimensional and indexed by $L(\lambda | -\lambda)$ where $\lambda \in \Z$. The projective cover $P(\lambda | -\lambda)$ is four dimensional with three radical layers. The head and socle of $P(\lambda | -\lambda)$ are both isomorphic to $L(\lambda | -\lambda)$ and the second layer is isomorphic to $L(\lambda +1 | -\lambda -1)\oplus L(\lambda -1 | -\lambda +1) $. The minimal projective resolution of the trivial supermodule $L(0 | 0)$ is given by
  $$\cdots \rightarrow P(2 | -2) \oplus P(0 | 0) \oplus P(-2 | 2)\rightarrow P(1 | -1) \oplus P(-1 | 1)\rightarrow P(0 | 0)\rightarrow L(0 | 0)\rightarrow 0$$
  Applying $\Hom_{\fu(\fg)}(-,k)$ we get a long exact sequence
  $$0\rightarrow \Hom_{\fu(\fg)}(L(0 \mid 0),k)\rightarrow \Hom_{\fu(\fg)}( P(0 | 0),k)\rightarrow \Hom_{\fu(\fg)}(P(1 | -1) \oplus P(-1 \mid 1),k)\rightarrow \cdots $$
  Since the resolution is minimal and $k$ is irreducible, all differentials are zero. Thus the cohomology is simply the set of  cochains.
  \end{ex}







  \subsection{Finite Generation} Let $G$ be a finite group.  A classical result (see \cite{Ev}) states that the group cohomology ring $\HH^\bullet(G, k)$ is a finitely generated $k$-algebra. This fact is used to define support varieties of supermodules over the group algebra $kG$. For a finite-dimensional cocommutative Hopf algebra $A$ the graded cohomology ring $\HH^\bullet(A, k)$ is shown to be finitely generated in \cite{FS}. Similarly as above, this is used to define support varieties of supermodules over $A$. In this subsection  we prove that the cohomology ring $\HH^\bullet(\fu(\fg), k)$ of a  restricted enveloping algebra $\fu(\fg)$ is finitely generated.


\subsection{}Let $I= \Ker(\varepsilon : \fu(\fg) \rightarrow k)$ be the augmentation ideal of $\fu(\fg)$.  
  \begin{lem} \label{P:nilaug}The augmentation ideal $I$ of $\fu(\fg)$ is nilpotent.
  \end{lem}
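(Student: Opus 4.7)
The natural approach is via the PBW filtration of $\fu(\fg)$. Fixing bases $x_1,\dots,x_m$ of $\fg_{\0}$ and $y_1,\dots,y_n$ of $\fg_{\1}$, I would assign to each PBW monomial $x_1^{a_1}\cdots x_m^{a_m}y_1^{b_1}\cdots y_n^{b_n}$ the weight $\sum a_i+\sum b_j$ and let $F_r\fu(\fg)$ be the span of monomials of weight at most $r$. Setting $N_0:=m(p-1)+n$, one checks that the defining relations $[x_i,x_j]$, $x_i^p-x_i^{[p]}$, and $y_iy_j+y_jy_i-[y_i,y_j]$ all rewrite weight-$w$ elements as ones of strictly lower weight, so $\{F_r\}$ is an increasing algebra filtration with $F_0=k$ and $F_{N_0}=\fu(\fg)$.

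Next I would identify the associated graded algebra as
$$\gr\fu(\fg)\;\cong\;k[\bar x_1,\dots,\bar x_m]/(\bar x_i^p)\otimes\Lambda(\bar y_1,\dots,\bar y_n),$$
a finite-dimensional positively graded supercommutative algebra concentrated in weights $0,\dots,N_0$. Its augmentation ideal $(\gr\fu(\fg))_+$ is then manifestly nilpotent of index at most $N_0+1$, since any product of that many positive-degree homogeneous elements has total degree exceeding $N_0$ and so vanishes by degree considerations.

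Because $\varepsilon$ is an isomorphism on $F_0=k$, the intersection $I_r:=I\cap F_r$ satisfies $I_0=0$ and we have a direct-sum decomposition $F_r=k\oplus I_r$ for all $r$; consequently $\gr I$ identifies with $(\gr\fu(\fg))_+$ and inherits its nilpotency. A leading-term calculation then shows that for $a_1,\dots,a_N\in I$ of minimal weights $d_i\ge 1$ and $N\ge N_0+1$, the product $a_1\cdots a_N$ lies in $I_{(\sum d_i)-1}$, i.e., one filtration step below the naively expected level, since its graded image is forced to be zero.

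The hard part is to turn this single-step filtration drop into a genuine vanishing $I^M=0$. I would combine the drop with descending-chain stabilization: since $\fu(\fg)$ is Artinian the chain $I\supseteq I^2\supseteq\cdots$ stabilizes at some ideal $J$ with $J=IJ$; passing to $\gr$ yields $\bar J\subseteq(\gr\fu(\fg))_+\cdot\bar J$, and a Nakayama-type argument inside the nilpotent graded augmentation ideal forces $\bar J=0$, hence $J=0$. The main subtlety lies in controlling the lower-order error terms reintroduced at each multiplication by the $p$-operation and the super-bracket, which is precisely what the filtration and its graded quotient are designed to track.
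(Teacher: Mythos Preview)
Your approach differs from the paper's. The paper writes $\fu(\fg)\cong\fu(\fg_{\0})\otimes\Lambda(\fg_{\1})$ as graded vector spaces, observes that $I$ is then the sum of the augmentation ideal of $\fu(\fg_{\0})$ and the part coming from $\Lambda^{\ge 1}(\fg_{\1})$, and invokes the nilpotency of the augmentation ideal of $\fu(\fg_{\0})$ as a known fact from the restricted Lie algebra literature. You instead try to deduce nilpotency of $I$ purely from the nilpotency of the augmentation ideal of the PBW-associated graded $\gr\fu(\fg)$.

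There is a genuine gap in your passage from $\gr\fu(\fg)$ back to $\fu(\fg)$. First, the ``single-step drop'' is vacuous: with $N\ge N_0+1$ and each $d_i\ge 1$ one has $(\sum d_i)-1\ge N_0$, so the conclusion $a_1\cdots a_N\in F_{(\sum d_i)-1}=F_{N_0}=\fu(\fg)$ says nothing. Second, and fatally, the Nakayama step does not go through: from $J=IJ$ you cannot conclude $\gr J\subseteq(\gr\fu(\fg))_+\cdot\gr J$. For an \emph{increasing} algebra filtration the only general containment is $\gr I\cdot\gr J\subseteq\gr(IJ)=\gr J$, which points the wrong way. Concretely, take $\fg=\fg_{\0}=k h$ with $h^{[p]}=h$. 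Then $\fu(\fg)=k[h]/(h^p-h)$ is semisimple, and since $h=h^2\cdot h^{p-2}$ one has $I=(h)=I^2$, so $I$ is not nilpotent; yet your PBW filtration still yields $\gr\fu(\fg)=k[\bar h]/(\bar h^p)$ with $(\gr\fu(\fg))_+=(\bar h)$ nilpotent, and here $\gr J=(\bar h)\not\subseteq(\bar h^2)=(\gr\fu(\fg))_+\cdot\gr J$. This example shows that nilpotency of $(\gr\fu(\fg))_+$ by itself cannot force nilpotency of $I$, so the strategy cannot be repaired without an additional hypothesis on the $p$-map of $\fg_{\0}$. The paper's route sidesteps this issue by importing the $\fu(\fg_{\0})$ result as a black box rather than trying to manufacture it from the increasing PBW grading.
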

   \begin{proof} First note that  $$\fu(\fg) \cong  \fu(\fg_{\0})\otimes \Lambda(\fg_{\1})=\fu(\fg_{\0}) \oplus_{r\geq 1} \fu(\fg_{\0})\otimes \Lambda^r(\fg_{\1})$$
   as $\Z_2$-graded vector spaces. Thus the augmentation ideal of $\fu(\fg)$ will be the direct sum of augmentation ideal of $\fu(\fg_{\0})$ and some nilpotent elements. Since the augmentation ideal of $\fu(\fg_{\0})$ is nilpotent(cf. \cite{Jan}), the result follows.
  \end{proof}
\subsection{}
  The powers of the augmentation ideal yield a finite filtration of $\fu(\fg)$:
  $$\fu(\fg) = I^0\supseteq I^1\supseteq \cdots$$
Let $\gr \fu(\fg)$ denote the associated graded superalgebra $\bigoplus_{r\geq 0} I^r/I^{r+1}.$ Then $\gr \fu(\fg)$ is a super commutative super Hopf algebra.

For any basis $\{y_1, \dots, y_s\}$ of $\fg_{\0}^\ast$ let
$$(\fg_{\0}^\ast)^p: = (y_1^p, \dots, y_s^p)$$
denote the ideal of the polynomial ring $S(\fg_{\0}^\ast)$ generated by $y_1^p, \dots, y_r^p$. Since our field has characteristic $p$ this ideal does not depend on the choice of the  basis.

\begin{lem}\label{L:gr} Let $\fg = \fg_{\0} \oplus \fg_{\1}$ be a restricted Lie superalgebra and $\fu(\fg)$ be the associated restricted enveloping algebra. Then
  $$\gr \fu(\fg)= \gr\fu(\fg_{\0})\otimes \Lambda (\fg_{\1}) \cong S(\fg_{\0}^{*})/(\fg_{\0}^{*})^p\otimes \Lambda(\fg_{\1}). $$
\end{lem}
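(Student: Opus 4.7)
My approach is to reduce the computation of $\gr \fu(\fg)$ to a tensor product of the associated gradeds of the even and odd pieces, and then invoke the classical result in the restricted Lie algebra case.

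Starting from the vector space decomposition $\fu(\fg) \cong \fu(\fg_{\0}) \otimes \Lambda(\fg_{\1})$ already used in Lemma~\ref{P:nilaug}, let $I_0$, $I_1$, and $I$ denote the augmentation ideals of $\fu(\fg_{\0})$, $\Lambda(\fg_{\1})$, and $\fu(\fg)$, respectively. Since the counit vanishes on all of $\fg$, the identification gives
$$I \;=\; I_0 \otimes \Lambda(\fg_{\1}) \,+\, \fu(\fg_{\0}) \otimes I_1.$$
The central step is to upgrade this to a tensor decomposition of every power of $I$, namely
$$I^n \;=\; \sum_{a+b=n} I_0^{\,a} \otimes I_1^{\,b}.$$
Granting this, passing to associated gradeds immediately yields $\gr \fu(\fg) \cong \gr \fu(\fg_{\0}) \otimes \gr \Lambda(\fg_{\1})$, which is the first equality in the Lemma.

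For the explicit isomorphism with $S(\fg_{\0}^{*})/(\fg_{\0}^{*})^p \otimes \Lambda(\fg_{\1})$, the odd factor is easy: the natural $\Z$-grading on the exterior algebra by exterior degree agrees with the augmentation ideal filtration, and one reads off $\gr \Lambda(\fg_{\1}) \cong \Lambda(\fg_{\1})$ as super Hopf algebras. For the even factor, I would invoke the classical statement for restricted Lie algebras (see \cite{Jan}) that $\gr \fu(\fg_{\0}) \cong S(\fg_{\0}^{*})/(\fg_{\0}^{*})^p$ as super-commutative super Hopf algebras. Combining the two factors gives the stated isomorphism.

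\textbf{Main obstacle.} The subtle step is proving the tensor-product decomposition of $I^n$. The difficulty is that $\fu(\fg)$ is not literally the tensor product of $\fu(\fg_{\0})$ and $\Lambda(\fg_{\1})$ as superalgebras: multiplication intertwines the two factors via the super-bracket relations $[\fg_{\0}, \fg_{\1}] \subseteq \fg_{\1}$ and $[\fg_{\1}, \fg_{\1}] \subseteq \fg_{\0}$, and through the restriction relation $x^p = x^{[p]}$ for $x \in \fg_{\0}$. One must verify that every time one super-commutes an even generator past an odd one, or collapses two odd generators via their bracket, the correction lies in $\fg \subseteq I$ and so drops exactly one filtration step; the $p$-th power reduction similarly drops $p-1$ steps. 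With this bookkeeping, the inclusion $I^n \subseteq \sum_{a+b=n} I_0^{\,a} \otimes I_1^{\,b}$ follows by induction on $n$, and the reverse inclusion is immediate from the tensor-product structure at the level of vector spaces.
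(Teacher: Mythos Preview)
Your overall approach coincides with the paper's: both factor $\gr$ through the tensor decomposition $\fu(\fg)\cong\fu(\fg_{\0})\otimes\Lambda(\fg_{\1})$ and then cite the classical restricted Lie algebra result. You are in fact more explicit than the paper in isolating the key claim $I^{n}=\sum_{a+b=n}I_{0}^{\,a}\otimes I_{1}^{\,b}$. Unfortunately, the inclusion $I^{n}\subseteq\sum_{a+b=n}I_{0}^{\,a}\otimes I_{1}^{\,b}$ that you propose to prove by induction fails whenever $[\fg_{\1},\fg_{\1}]\neq 0$. For $y_{1},y_{2}\in\fg_{\1}$ one has $[y_{1},y_{2}]=y_{1}y_{2}+y_{2}y_{1}\in I^{2}$, yet under the PBW identification $[y_{1},y_{2}]\in\fg_{\0}$ sits in $I_{0}\otimes 1$, i.e.\ in bidegree $(a,b)=(1,0)$ with $a+b=1$. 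Your heuristic that correction terms ``drop exactly one filtration step'' is correct for the single $I$-adic filtration, but what the tensor identity needs is that the \emph{total bidegree} $a+b$ not drop, and the odd--odd bracket sends $(0,2)\mapsto(1,0)$.

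Concretely, take $\fg_{\0}=kx$, $\fg_{\1}=ky$ with $[y,y]=2x$, $[x,y]=0$, $x^{[p]}=0$. Then $y^{2}=x$ in $\fu(\fg)$, so $\fu(\fg)\cong k[y]/(y^{2p})$ with $I=(y)$, and $\gr\fu(\fg)\cong k[y]/(y^{2p})$ has Hilbert series $1+t+\cdots+t^{2p-1}$. On the other hand $\gr\fu(\fg_{\0})\otimes\Lambda(\fg_{\1})\cong k[x]/(x^{p})\otimes\Lambda(ky)$ has Hilbert series $(1+t+\cdots+t^{p-1})(1+t)$; these differ already at $t^{1}$, and the two algebras are not even isomorphic ungraded (their augmentation ideals have nilpotency indices $2p$ and $p+1$). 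So the statement itself fails as an isomorphism of (graded) algebras; the paper's proof conceals the same gap behind the assertion that the tensor product is a ``braided tensor product of algebras,'' which is precisely what breaks down when $[\fg_{\1},\fg_{\1}]\neq 0$.
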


\begin{proof} Recall that  $\fu(\fg) \cong  \fu(\fg_{\0})\otimes \Lambda(\fg_{\1})$ as a $\Z_2$-graded vector space. Since this tensor product is a braided tensor product of algebras we have,
\begin{align}\label{E:gr}
\gr \fu(\fg) \cong  & \gr(\fu(\fg_{\0})\otimes \Lambda(\fg_{\1}))  \\
\cong & \gr\fu(\fg_{\0})\otimes  \gr \Lambda(\fg_{\1}) & \\
 \cong &  \gr\fu(\fg_{\0})\otimes   \Lambda(\fg_{\1}).
\end{align}
Since $\fg_{\0}$ is a restricted Lie algebra it is also well known that (cf. \cite{FPa4})
\begin{equation} \label{E:resL}
\gr\fu(\fg_{\0}) \cong  S(\fg_{\0}^{*})/(\fg_{\0}^{*})^p.
\end{equation}
Combining  (\ref{E:gr}) and (\ref{E:resL}) we have,
$$\gr \fu(\fg) \cong S(\fg_{\0}^{*})/(\fg_{\0}^{*})^p\otimes \Lambda(\fg_{\1}). $$
\end{proof}

  \subsection{}We are now ready to describe the cohomology rings. We begin with:


\begin{prop}\label{P:Cgr} Let $\fg = \fg_{\0} \oplus \fg_{\1}$ be a restricted Lie superalgebra and $\fu(\fg)$ be the associated restricted enveloping algebra . Then
\begin{align*}
\HH^\bullet(\gr \fu(\fg), k)
  \cong & S(( \fg_{\0}\oplus \fg_{\1})^\ast)\otimes \Lambda(\fg_{\0}^\ast).
\end{align*}
\end{prop}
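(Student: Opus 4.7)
The plan is to reduce the cohomology of $\gr \fu(\fg)$ to a tensor product of two known computations via a Künneth-type argument, then repackage the result.

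First I would invoke Lemma \ref{L:gr} to write $\gr \fu(\fg) \cong A \otimes B$ as super Hopf algebras, with $A = S(\fg_{\0}^{*})/(\fg_{\0}^{*})^p$ concentrated in even $\Z_2$-degree and $B = \Lambda(\fg_{\1})$ super-generated by the odd space $\fg_{\1}$. Since $A$ and $B$ are sub super Hopf algebras and $k$ is a field, the super-tensor product of minimal projective resolutions of $k$ over $A$ and over $B$ is again a projective resolution of $k$ over $A\otimes B$. Applying $\Hom(-,k)$ yields the Künneth isomorphism
$$\HH^\bullet(\gr \fu(\fg), k) \cong \HH^\bullet(A, k) \otimes \HH^\bullet(B, k)$$
as super graded-commutative algebras.

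Next I would compute each factor separately. The algebra $A$ may be identified with the restricted enveloping algebra of the abelian restricted Lie algebra $\fg_{\0}^{*}$ (with trivial $p$-map), and its cohomology is classically known (cf.\ \cite{FPa4}) to be
$$\HH^\bullet(A, k) \cong \Lambda(\fg_{\0}^{*}) \otimes S(\fg_{\0}^{*}),$$
with the exterior generators in cohomological degree $1$ and the polynomial generators in cohomological degree $2$ (the Bocksteins of the degree-$1$ classes). For $B = \Lambda(\fg_{\1})$, viewed as a super-symmetric super Hopf algebra on the odd space $\fg_{\1}$, the Koszul-type resolution $B \otimes S(\fg_{\1}) \to k$ is minimal and gives
$$\HH^\bullet(B, k) \cong S(\fg_{\1}^{*}),$$
with generators in cohomological degree $1$; because $\fg_{\1}$ is odd, the total parity of these generators is $0$, so they genuinely commute and the polynomial algebra structure is consistent with graded-commutativity. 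Combining these two computations and regrouping the two polynomial factors yields
$$\HH^\bullet(\gr \fu(\fg), k) \cong \Lambda(\fg_{\0}^{*}) \otimes S(\fg_{\0}^{*}) \otimes S(\fg_{\1}^{*}) \cong S((\fg_{\0}\oplus\fg_{\1})^{*}) \otimes \Lambda(\fg_{\0}^{*}),$$
as desired.

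The main obstacle I anticipate is the bookkeeping in the super-Künneth step: one must track the braided signs carefully to check that the super-tensor product of the two resolutions is indeed a resolution and that the induced Yoneda product on $\Ext$ coincides with the super-tensor product of the two Ext algebras. This is a routine but careful adaptation of the classical Künneth argument for Hopf algebra cohomology, and once it is in hand the rest of the proof is the two explicit computations recalled above.
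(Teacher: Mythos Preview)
Your proposal is correct and follows essentially the same approach as the paper: invoke Lemma~\ref{L:gr}, apply a K\"unneth isomorphism to split the cohomology as a tensor product, compute each factor separately, and regroup the polynomial pieces. The only cosmetic differences are that the paper cites \cite[Theorem~5.1]{FPa2} and \cite[Proposition~3.6]{AAH} for the two factor computations, whereas you cite \cite{FPa4} for the first and sketch the Koszul resolution directly for the second.
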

\begin{proof}It is well-known that the cohomology of a tensor product is
essentially the tensor product of the cohomologies. 
Thus  by Lemma \ref{L:gr} 
\begin{align}\label{E:cohotensor}
\HH^\bullet( \gr \fu(\fg), k) & \cong    \HH^\bullet( S(\fg_{\0}^{*})/(\fg_{\0}^{*})^p\otimes \Lambda(\fg_{\1}), k) \\ & \cong \HH^\bullet( S(\fg_{\0}^{*})/(\fg_{\0}^{*})^p,k)\otimes
\HH^\bullet(  \Lambda(\fg_{\1}), k)
\end{align}
By \cite[Theorem 5.1]{FPa2}
\begin{equation}\label{E:resl}
\HH^\bullet( S(\fg_{\0}^{*})/(\fg_{\0}^{*})^p,k) \cong S( \fg_{\0} ^\ast)\otimes \Lambda(\fg_{\0}^\ast)
\end{equation}
and by  \cite[Proposition  3.6]{AAH}
 \begin{equation}\label{E:sym} 
\HH^\bullet(\Lambda(\fg_{\1}), k) \cong    S(\fg_{\1}^\ast).
\end{equation}
Putting together (\ref{E:cohotensor}), (\ref{E:resl}) and (\ref{E:sym}), one has
$$\HH^\bullet(\gr \fu(\fg), k)
  \cong  S( \fg_{\0} ^\ast)\otimes \Lambda(\fg_{\0}^\ast)\otimes S(\fg_{\1}^\ast) \cong S(( \fg_{\0}\oplus \fg_{\1})^\ast)\otimes \Lambda(\fg_{\0}^\ast). \qedhere$$
\end{proof}

We now introduce some spectral sequence :
\begin{thm}\label{T:spec} Let $\fg = \fg_{\0}\oplus \fg_{\1} $ be a restricted Lie superalgebra and $M=M_{\0}\oplus M_{\1}$ be a finite dimensional restricted $\fg$-supermodule. Then there exists a spectral sequence
$$E_0^{i, j}(M) = S^l( \fg_{\0}^\ast) \otimes \Lambda^m(\fg_{\0}^\ast)\otimes S^n( \fg_{\1}^\ast)\otimes M \Longrightarrow H^{i+j}(\fu(\fg), M),$$
where i+j = 2l+m+n.
\end{thm}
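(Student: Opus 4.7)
The plan is to build the spectral sequence from the $I$-adic filtration on $\fu(\fg)$ studied in the previous subsection. By Lemma~\ref{P:nilaug} the augmentation ideal $I$ is nilpotent, so the descending chain $\fu(\fg)=I^0\supseteq I^1\supseteq \cdots \supseteq I^N=0$ is finite, and $\gr\fu(\fg)$ has the explicit form given in Lemma~\ref{L:gr}. First I would fix a projective resolution $P_\bullet \to k$ of the trivial supermodule by free $\fu(\fg)$-modules, and lift the $I$-adic filtration to a filtration $F^p P_\bullet$ of this resolution (the standard construction: choose bases for each $P_n$ adapted to the filtration). Applying $\Hom_{\fu(\fg)}(-,M)$ produces a filtered cochain complex $C^\bullet$ whose total cohomology is $\HH^\bullet(\fu(\fg),M)$.

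Next I would invoke the standard spectral sequence associated to a filtered complex. Because the filtration is bounded and exhaustive (thanks to nilpotence of $I$), this spectral sequence converges strongly to $\HH^{\bullet}(\fu(\fg),M)$. The $E_0$ term is, by construction, $\Hom_{\gr\fu(\fg)}(\gr P_\bullet, M)$; since $\gr P_\bullet$ is a projective resolution of $k$ over $\gr\fu(\fg)$ and $M$ carries only its ungraded super-vector-space structure at the $E_0$-level (the $I$-action on $M$ is annihilated upon passing to $\gr$), cohomology of $E_0$ with its internal differential is
\[
\HH^\bullet(\gr\fu(\fg),k)\otimes M.
\]
By Proposition~\ref{P:Cgr} this equals $S(\fg_{\0}^\ast)\otimes \Lambda(\fg_{\0}^\ast)\otimes S(\fg_{\1}^\ast)\otimes M$, and the internal gradings transport to the bigrading on $E_0$: a monomial in $S^l(\fg_{\0}^\ast)\otimes \Lambda^m(\fg_{\0}^\ast)\otimes S^n(\fg_{\1}^\ast)$ contributes total cohomological degree $2l+m+n$, which matches the relation $i+j=2l+m+n$ in the statement (the factor of two on $l$ reflecting that the even generators of $\HH^\bullet(S(\fg_{\0}^\ast)/(\fg_{\0}^\ast)^p,k)$ sit in cohomological degree two, as used in \cite[Theorem~5.1]{FPa2}).

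The main obstacle is the bookkeeping. There are two filtration degrees floating around (the resolution degree and the $I$-adic degree), and one must check they are compatible enough that the filtration on $C^\bullet$ is indeed bounded, that the $E_0$-page splits off $M$ as a tensor factor rather than as a nontrivially filtered coefficient system, and that the bigrading degenerates to the single relation $i+j=2l+m+n$ indexed in the theorem. The convergence then follows formally from the bounded-filtration criterion; the degeneration of the coefficient system follows from the fact that, at the $E_0$ stage, every element of $I$ acts as zero on $\gr M$, so $M$ enters only as a trivial coefficient module. Once these checks are in place, the theorem is a direct combination of the filtered-complex spectral sequence with Proposition~\ref{P:Cgr}.
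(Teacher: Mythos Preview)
Your approach is essentially the same as the paper's---filter by powers of the augmentation ideal, use nilpotence for convergence, and identify the first nontrivial page via Proposition~\ref{P:Cgr}---but two points deserve attention.

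First, the paper works with the explicit cobar complex $C^r(M)=I^{\otimes r}\otimes M$ rather than an arbitrary free resolution. The reason this matters is your claim that ``$\gr P_\bullet$ is a projective resolution of $k$ over $\gr\fu(\fg)$''. For a generic free resolution with the $I$-adic filtration this is not automatic: passing to associated graded can destroy exactness if the differentials do not respect filtration degree in a controlled way. For the (co)bar complex it is immediate that $\gr$ of the complex for $\fu(\fg)$ is the (co)bar complex for $\gr\fu(\fg)$, because the construction is functorial in the augmented algebra. Your phrase ``choose bases adapted to the filtration'' gestures at this but does not resolve it; the cobar choice does.

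Second, what you have actually identified is the $E_1$-page of the filtered-complex spectral sequence, not the $E_0$-page. The paper obtains the $E_0$ labeling in the theorem by an honest reindexing: from the explicit shape of $\HH^\bullet(\gr\fu(\fg),k)$ one sees that $E_1^{i,j}=0$ unless $(p-2)\mid j$, so $d_r=0$ whenever $r\not\equiv 1\pmod{p-2}$, and one may set the new $E_r^{i,j}$ equal to the old $E_{(p-2)r+1}^{i+j,\,-(p-2)i}$. This is the step that legitimately produces the $E_0$ displayed in the statement, and it is absent from your outline. Without it you have a correct spectral sequence with the right content, but mis-labeled by one page and with the bigrading left unexplained.
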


\begin{proof} 
 One can compute the cohomology ring  $\HH^\bullet(\fu(\fg), M)$ by using cobar resolution:
The cochain are  defined by $$C^0(M) = M$$ and $$C^r(M)= I^{\otimes r}\otimes M$$ for $r>0$, where $I$ is the augmentation ideal of $\fu(\fg)$ and $I^{\otimes r}$ denotes the tensor product of $I$ with itself $r$ times. 
The differentials are defined by  $$d_0: C^0(M) \rightarrow C^1(M)$$ equal to the zero map; for $r>0$, define $$d_r: C^r(M) \rightarrow C^{r+1}(M)$$ by the following formula
$$ d_r(x_1\otimes  \dots \otimes  x_r\otimes m) = \sum_{1\leq i\leq r; \ 1\leq j \leq j(i)}(-1)^i (x_1\otimes  \dots\otimes  x_{i-1}\otimes x_{ij}\otimes  x_{ij}'\otimes  x_{i+1}\otimes \dots\otimes  x_r\otimes  m)$$
where $\overline{\Delta}(x_i) = \sum x_{ij}\otimes x_{ij}'$ denotes the comultiplication of $\fu(\fg)$.

 The finite  filtration $$\fu(\fg) = I^0\supseteq I^1\supseteq \cdots$$of $\fu(\fg)$ gives a filtration of the cochains. Then by the general theory of filtered complexes there is a spectral sequence with $E_1$-terms $$E_1^{i,j}(M) = \HH^{i+j}(\gr \fu(\fg), k)_i\otimes M.$$ Since the augmentation ideal $I$ of $\fu(\fg)$ is nilpotent by Proposition \ref{P:nilaug}, the spectral sequence converges to the cohomology of the original complex. That is,
\begin{equation}\label{E:SE1}
E_1^{i,j}(M) = \HH^{i+j}(\gr \fu(\fg),k)_i\otimes M \implies \HH^{i+j}(\fu(\fg), M).
\end{equation}

Combining (\ref{E:SE1}) with Proposition \ref{P:Cgr} we see that our spectral sequence can be written as follows
\begin{equation}\label{E:E1}
E_1^{i, j}(M) = \HH^{i+j}(\gr \fu(\fg), M)
\cong S^l( \fg_{\0}^\ast) \otimes \Lambda^m(\fg_{\0} ^\ast)\otimes S^n( \fg_{\1}^\ast)\otimes M,
\end{equation}
where  $i+j = 2l+m+n$. Thus  $$E_1^{i(p-1)+j, -(p-2)j}(M) = S^l( \fg_{\0}^\ast) \otimes \Lambda^m((\fg_{\0}^\ast)\otimes S^n( \fg_{\1}^\ast)\otimes M  ,$$ where $i+j = 2l+m+n$,  and all other $E_1^{i,j}$ are zero. In particular $E_1^{i, j} =0$ for $(p-2)\nmid j$. This implies that $$d_r^{i,j} : E_r^{i, j}(M)\longrightarrow E_r^{i+r, 1-r+j}(M)$$ is zero for $r \not \equiv 1 \mod (p-2)$.  We can now reindex the spectral sequence by calling the new $E_r^{i, j}$ to be the old $E_{(p-2)r+1}^{i+j, -(p-2)i}$. This gives $E_0^{i, j}(M)$ as above.
\end{proof}
We can finally prove our finite generation result.
\begin{thm} \label{T:finiteg}Let $\fg = \fg_{\0}\oplus \fg_{\1} $ be a restricted Lie superalgebra and $M=M_{\0}\oplus M_{\1}$ be a finite dimensional restricted $\fg$-supermodule. Then
\begin{enumerate}
\item[(a)] The cohomology ring $\HH^\bullet(\fu(\fg), k)$ is finitely generated as a $k$-algebra.
\item[(b)] $\HH^\bullet(\fu(\fg), M)$ is a finitely generated $\HH^\bullet(\fu(\fg), k)$-supermodule.
\end{enumerate}
\end{thm}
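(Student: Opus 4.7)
The plan is to deduce both parts from the spectral sequence of Theorem~\ref{T:spec} by transporting finite generation from the $E_0$-page to the abutment. Setting $M=k$, the $E_0$-page
$$E_0^{\bullet,\bullet}(k) \;\cong\; S(\fg_{\0}^{\ast}) \otimes \Lambda(\fg_{\0}^{\ast}) \otimes S(\fg_{\1}^{\ast})$$
is a finitely generated Noetherian graded-commutative $k$-algebra. For general finite-dimensional $M$, the $E_0$-page is $E_0(k) \otimes M$, a finitely generated $E_0(k)$-module since $\dim_k M < \infty$.

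A preliminary step is to verify that the spectral sequence is multiplicative: the cup product on the cobar complex is compatible with the $I$-adic filtration, so each $E_r$-page is a graded-commutative $k$-algebra, each $d_r$ is a graded derivation, and $E_r^{\bullet,\bullet}(M)$ is a differential $E_r^{\bullet,\bullet}(k)$-module. I would also exploit the sparsity of nonzero differentials already observed in the proof of Theorem~\ref{T:spec}, namely that $d_r = 0$ unless $r \equiv 1 \pmod{p-2}$, which drastically limits how many differentials a given class must survive.

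The core claim is that $E_\infty(k)$ is a finitely generated $k$-algebra. The Frobenius trick in characteristic $p$ provides permanent cycles in the polynomial factor: if $z \in S(\fg_{\0}^{\ast}) \otimes S(\fg_{\1}^{\ast})$ is a generator and $d_r(z) = w$ on some page, then $d_r(z^p) = p\, z^{p-1} w = 0$, so by induction on $r$ a sufficiently large Frobenius power $z^{p^N}$ lies in $\Ker d_r$ for every $r$. Since the exterior factor $\Lambda(\fg_{\0}^{\ast})$ contributes only a finite-dimensional subspace, these Frobenius powers generate a Noetherian polynomial subalgebra $R \subseteq E_\infty(k)$ over which $E_\infty(k)$ is a finitely generated module (hence itself a finitely generated $k$-algebra). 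The same reasoning, applied to the $E_r^{\bullet,\bullet}(k)$-module structure on $E_r^{\bullet,\bullet}(M)$, shows $E_\infty(M)$ is a finitely generated $E_\infty(k)$-module.

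To conclude, first-quadrant convergence equips each $\HH^n(\fu(\fg),M)$ with a finite filtration whose associated graded is $\bigoplus_{i+j=n} E_\infty^{i,j}(M)$, and the standard lifting of algebra and module generators through such a finite filtration then yields both (a) and (b). The main obstacle is making the permanent-cycle claim rigorous: one must verify that on the reindexed $E_r$-pages the Frobenius powers $z^{p^N}$ genuinely survive every nontrivial differential, which requires an inductive bidegree computation in $E_0^{i,j}(k)$ (where $i+j = 2l+m+n$) and careful bookkeeping against the shift pattern of $d_r$ for $r \equiv 1 \pmod{p-2}$.
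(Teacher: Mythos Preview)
Your overall architecture---run the May-type spectral sequence of Theorem~\ref{T:spec}, locate a Noetherian polynomial subring of permanent cycles over which the $E_0$-page is finite, deduce finite generation of $E_\infty$, then lift through the finite filtration---is exactly the paper's strategy. The paper also invokes May for the multiplicative/differential-module structure, just as you do, and for part~(b) it cites Evens for the passage from $E_\infty$ to the abutment.

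The substantive difference is in how the permanent cycles are produced. The paper does \emph{not} use Frobenius powers: it simply asserts that the full polynomial factor $S((\fg_{\0}\oplus\fg_{\1})^{\ast})\cong S(\fg_{\0}^{\ast})\otimes S(\fg_{\1}^{\ast})$ survives to $E_\infty(k)$, and then observes that $E_0^{\bullet,\bullet}(k)$ (hence its section $E_\infty(k)$) is finite over this Noetherian ring because $\Lambda(\fg_{\0}^{\ast})$ is finite-dimensional. This is the shorter and more standard route in the May/Friedlander--Parshall tradition; the permanence of those polynomial classes is typically justified by constructing them as images of genuine cohomology classes (edge maps, or explicit cocycles), not by page-by-page survival arguments.

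Your Frobenius trick, by contrast, has the gap you yourself flag, and it is a real one. The induction ``$d_r(z^{p})=0$, hence pass to the next page'' shows only that $z^{p^{N}}$ survives to $E_N$, whereas the bidegree of $z^{p^{N}}$ grows like $p^{N}$, so the last potentially nonzero differential hitting or leaving that bidegree lives on a page of order $p^{N}$, not $N$. The sparsity $d_r=0$ for $r\not\equiv 1\pmod{p-2}$ disappears after the reindexing in Theorem~\ref{T:spec} and does not close this gap. So as written your argument does not terminate; you would need either a uniform bound on the relevant pages (which is not available here) or an independent construction showing the polynomial generators are permanent cycles---which is precisely what the paper's approach supplies (albeit tersely). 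Once you accept $S((\fg_{\0}\oplus\fg_{\1})^{\ast})\subseteq E_\infty(k)$, the rest of your outline and the paper's proof coincide.
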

\begin{proof}

(a) By Theorem \ref{T:spec}, as $\Lambda(\fg_{\0}^\ast)$ is finite dimensional, we observe that $$E_0^{\bullet, \bullet}(k):= \bigoplus_{i, j  \geq 0}E_0^{i, j}(k)$$ is  finitely generated as an $S(( \fg_{\0}\oplus \fg_{\1})^\ast) \cong S( \fg_{\0}^\ast)\otimes  S( \fg_{\1}^\ast) $-supermodule. Since
    $E_\infty(k)$ is a section of $E_0^{\bullet, \bullet}(k)$,  it is also finitely generated as an  $S(( \fg_{\0}\oplus \fg_{\1})^\ast)$ supermodule. Since $S(( \fg_{\0}\oplus \fg_{\1})^\ast)$  is a finitely generated $k$-algebra,  by transitivity of  finite generation it follows that $E_\infty(k)$ is finitely generated as a $k$-algebra.

(b) By \cite[Theorem 4]{May}, $E_0^{\bullet, \bullet}(M)$ is a differential supermodule for the differential algebra $E_0^{\bullet, \bullet}(k)$. By arguing as in part (a), one sees that $$E_0^{\bullet, \bullet}(M):= \bigoplus_{i, j \geq 0}E_0^{i, j}(M)$$ is finitely generated as an $S(( \fg_{\0}\oplus \fg_{\1})^\ast)\subseteq E_\infty(k)$-supermodule. Thus $E_\infty(M)$ is finitely generated as an $E_\infty(k)$-supermodule. The finiteness of $\HH^\bullet(\fu(\fg), M)$ over $\HH^\bullet(\fu(\fg), k)$ is argued as in \cite[Proposition 2.1]{Ev}.
\end{proof}

\section{support varieties}
\subsection{} In this section we  recall the notion of the support variety of a finite dimensional $\fu(\fg)$-supermodule and study the properties of these varieties.  Let $M, N$ be finite dimensional  $\fu(\fg)$-supermodules. Recall that  $\HH^{\ev}(\fu(\fg), k)$ acts on $\HH^{\bullet}(\fu(\fg), M^\ast \otimes N)$. Let
$I(M, N)$ be the annihilator ideal of this action. We define the   \emph{relative support variety} of the pair
$(M,N)$ to be
$$\V_{\fg}(M, N) = \MaxSpec (\HH^{\ev}(\fu(\fg), k)/I(M, N)),$$
the maximal ideal spectrum of the finitely generated commutative $k$-algebra $$\HH^{\ev}(\fu(\fg), k)/I(M, N)$$.  For short hen $M=N$,  we write $$I(M):=I(M,M)$$ and
$$\V_\fg(M):= \V_\fg(M,M).$$
The latter is called the \emph{support variety} of $M$.

Since $\HH^{\bullet}(\fu(\fg), M^\ast \otimes M)$ is a graded supermodule over the graded ring $\HH^{\ev}(\fu(\fg), k)$, the variety $\V_\fg(M)$ is a closed, conical subvariety of $\V_\fg(k)$.

Note that for any finite dimensional $\fu(\fg)$ supermodules $M$ and $N$ and any maximal ideal $\fm$ in $\HH^{\ev}(\fu(\fg), k)$ we have
\begin{equation}\label{E:SI}
\fm \in \V_\fg(M, N) \ \  \text{if and only if} \ \  I(M, N)\subseteq \fm  \ \ \text{if and only if} \ \  \Ext^\bullet(M, N)_{\fm} \neq 0.
\end{equation}

\subsection{}
The following theorem shows that support varieties for finite dimensional $\fu(\fg)$ supermodules satisfy the desirable properties of support variety theory.

\begin{thm} \label{T:SP}Let $M, N$ be finite dimensional $\fu(\fg)$-supermodules. Then,
\begin{itemize}

\item [(a)] $\V_\fg(M\oplus N)= \V_\fg(M) \cup \V_\fg(N)$.
\item [(b)] $\V_{\fg}(M, N)\subseteq \V_\fg(M) \cap \V_\fg(N).$
\item [(c)] $\V_\fg(M)= \cup_{S \in \Irr(\fu(\fg))}\V_\fg(M, S)=\cup_{S \in \Irr(\fu(\fg))}\V_\fg(S, M)$, where $\Irr(\fu(\fg))$ denotes the set of all irreducible $\fu(\fg)$-supermodules.

    \item [(d)] $\V_\fg(M)= \{0\}$ if and only if $M$ is projective.
\item [(e)] $\V_\fg(M\otimes N)\subseteq \V_\fg(M) \cap \V_\fg(N)$.
\end{itemize}
\end{thm}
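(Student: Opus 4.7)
The plan is to reduce everything to ideal-theoretic manipulations in $R := \HH^{\ev}(\fu(\fg), k)$ and to invoke Theorem \ref{T:finiteg} together with Corollary \ref{C:injective} only when unavoidable. I would prove (b) first, deduce (a), (c), and (e) from it by standard manipulations, and tackle (d) last.

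For (b), the key observation is that the $R$-module action on $\Ext^\bullet(M, N) = \HH^\bullet(\fu(\fg), M^\ast \otimes N)$ is given by cup product, which equals (up to sign) Yoneda composition with $\zeta \otimes \id_M \in \Ext^\bullet(M, M)$, and also with $\zeta \otimes \id_N \in \Ext^\bullet(N, N)$. Consequently $I(M) + I(N) \subseteq I(M, N)$, from which (b) is immediate. The same factorization, now with $M \otimes N$ in place of the target, shows $I(M),\, I(N) \subseteq I(M \otimes N)$ and hence gives (e).

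For (a), decomposing $(M \oplus N)^\ast \otimes (M \oplus N)$ as a direct sum of four Ext-summands yields $I(M \oplus N) = I(M) \cap I(M, N) \cap I(N, M) \cap I(N)$; since a maximal ideal contains an intersection if and only if it contains one of the factors, part (b) collapses the two cross terms and gives the desired equality. For (c), the $\supseteq$ inclusion is immediate from (b). For $\subseteq$, I would pick a composition series $0 = M_0 \subset M_1 \subset \cdots \subset M_r = M$ with irreducible factors $S_i$; if $\fm$ lies outside every $\V_\fg(M, S_i)$, then the long exact sequences obtained by localizing $0 \to M_{i-1} \to M_i \to S_i \to 0$ at $\fm$ give an induction showing $\Ext^\bullet(M, M_r)_\fm = 0$, so $\fm \notin \V_\fg(M)$. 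The parallel statement $\V_\fg(M) = \cup_S \V_\fg(S, M)$ is symmetric.

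Part (d) is the main obstacle, since it genuinely uses both finite generation and self-injectivity. One direction is formal: projectivity of $M$ forces $\Ext^{>0}(M, M) = 0$, so every positive-degree element of $R$ annihilates $\Ext^\bullet(M, M)$ and $\V_\fg(M) = \{0\}$. For the converse, $\V_\fg(M) = \{0\}$ together with (c) implies $\V_\fg(M, S) = \{0\}$ for every irreducible $S$. By Theorem \ref{T:finiteg}, $\Ext^\bullet(M, S)$ is finitely generated over the noetherian ring $R$, so its support collapsing to the augmentation ideal forces it to be finite-dimensional; hence $\Ext^n(M, S) = 0$ for $n \gg 0$. Computing Ext from a minimal projective resolution $P_\bullet \to M$, all differentials in $\Hom(P_\bullet, S)$ vanish because the image of $P_n \to P_{n-1}$ lies in the radical of $P_{n-1}$, so $\Ext^n(M, S) \cong \Hom(P_n, S)$. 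If some $P_n$ were nonzero it would admit an irreducible head $S$ with $\Hom(P_n, S) \neq 0$, contradicting the vanishing; thus $M$ has finite projective dimension, and Corollary \ref{C:injective} promotes this to projectivity.
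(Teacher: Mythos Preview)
Your argument is correct and follows essentially the same route as the paper: parts (a)--(c) and (e) are exactly the standard Benson-style manipulations the paper defers to \cite[Section 5.7]{Ben2}, and your treatment of (e) via the factorization of the cup-product action matches the paper's sketch. The only cosmetic difference is in (d), where you use $\V_\fg(M,S)$, $\Ext^\bullet(M,S)$, and a minimal \emph{projective} resolution, while the paper runs the dual argument with $\V_\fg(S,M)$, $\Ext^\bullet(S,M)$, and a minimal \emph{injective} resolution---both rely on Theorem~\ref{T:finiteg} and Corollary~\ref{C:injective} in the same way.
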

\begin{proof}

(a)-(c) is proven as in \cite[Section 5.7]{Ben2}.

(d) If $M$ is projective, then higher extension of $M$ will vanish. Therefore,  $\V_\fg(M)= \{0\}$.
If  $\V_\fg(M)= \{0\}$, then by (c) $\V_\fg(S, M)= \{0\}$ for every irreducible $\fu(\fg)$-supermodule $S$. Since $\Ext^\bullet(S,
M) \cong \HH^\bullet(\fu(\fg),S^\ast\otimes M )$ is finitely generated as $\HH^\bullet(\fu(\fg), k)$-supermodule by Theorem \ref{T:finiteg}, there exist some integer $K$ such that $\Ext^n(S,M)=0$ for all $n>K$ and all irreducible supermodules $S$. So the minimal injective resolution of $M$ is finite. Since $\fu(\fg)$ is self injective by Corollary \ref{C:injective} the result follows.

(e) The proof is fairly standard but we include the details for the reader's convenience. As in the case of finite groups  the action of $\HH^{\ev}(\fu(\fg), k)$ on $\Ext^\bullet_{\fu(\fg)}(M\otimes N, M\otimes N)$ is composed of applying first $-\otimes M$ then $-\otimes N$ and finally applying the Yoneda product. Therefore $I(M)\subseteq I(M\otimes N)$,  and this implies that $\V_\fg(M\otimes N)\subseteq \V_\fg(M)$. Since $\Ext^\bullet_{\fu(\fg)}(M\otimes N, M\otimes N)\cong \Ext^\bullet_{\fu(\fg)}(M, M\otimes N\otimes N^\ast)$, we have $\V_\fg(M\otimes N)=\V_\fg(M, M\otimes N\otimes N^\ast)$ and by (b) this is included in $ \V_\fg(M)$. Similarly $\V_\fg(M\otimes N)\subseteq \V_\fg(N)$.

\end{proof}

\subsection{}

One important property in the theory of support varieties is the realizability
of any conical variety as the support variety of some supermodule in the category. Carlson \cite{Ca1} first
proved this for finite groups in the 1980s. Friedlander and Parshall \cite{FPa2}
later used Carlson's proof to establish realizability for restricted
Lie algebras. For arbitrary finite group schemes the finite
generation of cohomology due to Friedlander and Suslin \cite{FS} allowed one to
define support varieties. In this generality the realizability of supports
was established using Friedlander and Pevtsova's method \cite{FPe} of concretely
describing support varieties through $\pi$-points.

\subsection{} We first describe our set up and prove some preliminary results.

Let $0 \neq \zeta \in \HH^{n}(\fu(\fg), k)$. Since $$\HH^{n}(\fu(\fg), k) \cong \Hom_{\fu(\fg)}(\Omega^n(k), k),$$  where $\Omega^n(k)$ denotes the $n$th syzygy of $k$,
$\zeta$ corresponds to a surjective map $$\hat{\zeta}:\Omega^n(k) \rightarrow k.$$
We  set
$$ L_{\zeta} =  \Ker (\hat{\zeta}:\Omega^n(k)\rightarrow k )\subseteq \Omega^n(k).$$ Thus $L_{\zeta}$ is defined by the following short exact sequence:
$$0\longrightarrow L_{\zeta}\longrightarrow \Omega^n(k) \stackrel{\hat{\zeta}}\longrightarrow k \longrightarrow 0$$
The supermodules $ L_{\zeta}$  are often called $\lq \lq$Carlson supermodules".

Since $\Omega^n(M)$ is isomorphic to $\Omega^n(k)\otimes M$ up to some projective summands, for any finite dimensional supermodules $M$ and $N$ we have
\begin{equation}\label{E:degreeshift}
\Ext_{\fu(\fg)}^r(\Omega^n(k)\otimes M, N) \cong \Ext_{\fu(\fg)}^{n+r}(M, N).
\end{equation}

If $J$ is an ideal of some commutative ring $A,$ then let $\mathcal{Z}(J)$ be the variety defined by $J.$  That is,
\[
\mathcal{Z}(J) = \left\{\mathfrak{m} \in \MaxSpec (A) \mid J \subseteq \mathfrak{m}  \right\}.
\]  In particular, for $a \in A$ let $\mathcal{Z}(a)$ denote the variety defined by the ideal $(a).$

\begin{prop} \label{T:Mzeta} Let $M$ be a finite dimensional $\fu(\fg)$-supermodule, and let $\zeta$ be a nonzero homogeneous element of positive degree in $\HH^{\ev}(\fu(\fg), k)$. Then
$$\V_\fg(M\otimes L_{\zeta} ) = \V_\fg(M) \cap \mathcal{Z}(\zeta). $$
\end{prop}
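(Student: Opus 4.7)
The plan is to establish the two inclusions separately, both via the long exact sequence obtained from tensoring the defining short exact sequence of $L_\zeta$ with $M$. Applying $-\otimes M$ to $0\to L_\zeta \to \Omega^n(k) \to k \to 0$ yields
$$0 \longrightarrow L_\zeta\otimes M \longrightarrow \Omega^n(k)\otimes M \longrightarrow M \longrightarrow 0, \qquad (\ast)$$
and for any $\fu(\fg)$-supermodule $N$, applying $\Ext^\bullet_{\fu(\fg)}(-,N)$ together with the degree-shift isomorphism (\ref{E:degreeshift}) produces a long exact sequence of graded $\HH^\bullet(\fu(\fg),k)$-modules
$$\cdots \to \Ext^r(M,N) \stackrel{\cdot\zeta}{\longrightarrow} \Ext^{r+n}(M,N) \stackrel{\alpha}{\longrightarrow} \Ext^r(L_\zeta\otimes M,N) \stackrel{\beta}{\longrightarrow} \Ext^{r+1}(M,N) \stackrel{\cdot\zeta}{\longrightarrow} \cdots,$$
where the maps labelled $\cdot \zeta$ are Yoneda multiplication by $\zeta$. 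This identification rests on the fact that $\Omega^n(k)\otimes M \cong \Omega^n(M)$ modulo projectives, under which the map $\Omega^n(k)\otimes M\to M$ appearing in $(\ast)$ represents the class $\zeta\cdot\mathrm{id}_M \in \Ext^n(M,M)$.

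For the forward inclusion $\V_\fg(M\otimes L_\zeta)\subseteq \V_\fg(M)\cap\mathcal{Z}(\zeta)$, containment in $\V_\fg(M)$ is immediate from Theorem \ref{T:SP}(e). For containment in $\mathcal{Z}(\zeta)$, I would take $N = M\otimes L_\zeta$ in the sequence above and show that $\zeta^2$ annihilates $\Ext^\bullet(M\otimes L_\zeta, M\otimes L_\zeta)$ by a two-step diagram chase: for $x$ in this group, $\HH^\bullet$-linearity of $\beta$ combined with exactness gives $\beta(\zeta\cdot x)=\zeta\cdot\beta(x)=0$, so $\zeta\cdot x = \alpha(y)$ for some $y$; a further application yields $\zeta^2\cdot x = \alpha(\zeta\cdot y) = 0$ since $\zeta\cdot y\in \mathrm{im}(\cdot\zeta) = \ker\alpha$. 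Thus $\zeta^2 \in I(M\otimes L_\zeta)$, so $\zeta$ lies in the radical of this ideal, giving $\V_\fg(M\otimes L_\zeta)\subseteq \mathcal{Z}(\zeta)$.

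For the reverse inclusion $\V_\fg(M)\cap\mathcal{Z}(\zeta)\subseteq \V_\fg(M\otimes L_\zeta)$, let $\mathfrak{m}\in\V_\fg(M)\cap\mathcal{Z}(\zeta)$ and take $N = M$ in the long exact sequence. Suppose for contradiction that $\mathfrak{m}\notin \V_\fg(M\otimes L_\zeta, M)$; then $\Ext^\bullet(L_\zeta\otimes M, M)_{\mathfrak{m}} = 0$ in every degree, so the long exact sequence collapses to show that $\cdot\zeta$ is an isomorphism of graded modules on $\Ext^\bullet(M,M)_{\mathfrak{m}}$. Passing to the quotient by $\mathfrak{m}$, the induced map on $\Ext^\bullet(M,M)_{\mathfrak{m}}/\mathfrak{m}\Ext^\bullet(M,M)_{\mathfrak{m}}$ is simultaneously an isomorphism and the zero map (since $\zeta\in\mathfrak{m}$), forcing the quotient to vanish. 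Since $\Ext^\bullet(M,M)_{\mathfrak{m}}$ is finitely generated over the Noetherian local ring $\HH^{\ev}(\fu(\fg),k)_{\mathfrak{m}}$ by Theorem \ref{T:finiteg}, Nakayama's lemma then gives $\Ext^\bullet(M,M)_{\mathfrak{m}} = 0$, contradicting $\mathfrak{m}\in\V_\fg(M)$. Hence $\mathfrak{m}\in\V_\fg(M\otimes L_\zeta, M)\subseteq\V_\fg(M\otimes L_\zeta)$ by Theorem \ref{T:SP}(b).

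The main obstacle is the precise identification of the middle map $\Ext^r(M,N)\to\Ext^{r+n}(M,N)$ with Yoneda multiplication by $\zeta$: this requires tracking the canonical class in $\Ext^n(M,M)$ represented by the composite $\Omega^n(M)\cong\Omega^n(k)\otimes M \xrightarrow{\hat\zeta\otimes\mathrm{id}} M$ in the super-Hopf setting, together with verifying that the resulting long exact sequence is one of graded $\HH^\bullet$-modules. A secondary subtlety is the application of the graded Nakayama lemma, which is legitimate because $\HH^{\ev}(\fu(\fg),k)$ is finitely generated commutative and $\Ext^\bullet(M,M)$ is finitely generated over it by Theorem \ref{T:finiteg}.
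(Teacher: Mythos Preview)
Your argument is correct and rests on the same backbone as the paper's proof: the long exact sequence coming from $(\ast)$ with the middle map identified as Yoneda multiplication by $\zeta$. The organisation, however, differs in both halves.

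For the inclusion $\V_\fg(M\otimes L_\zeta)\subseteq\mathcal{Z}(\zeta)$ the paper reduces, via Theorem~\ref{T:SP}(e) and (c), to showing $\V_\fg(L_\zeta,S)\subseteq\mathcal{Z}(\zeta)$ for each simple $S$, and then localises at a maximal ideal $\mathfrak m$ with $\zeta\notin\mathfrak m$: multiplication by $\zeta$ becomes invertible, so the long exact sequence forces $\Ext^\bullet(L_\zeta,S)_{\mathfrak m}=0$. Your diagram chase showing that $\zeta^2$ already annihilates $\Ext^\bullet(L_\zeta\otimes M,N)$ for \emph{every} $N$ is a genuinely different and more elementary route: it avoids both the reduction to simples and any localisation, at the cost of relying on $\HH^\bullet$-linearity of the connecting maps (which you correctly flag as the main technical point).

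For the reverse inclusion both proofs run the same Nakayama argument after the long exact sequence collapses; the paper decomposes via simples $S$ using Theorem~\ref{T:SP}(c), while you take $N=M$ directly and conclude through $\V_\fg(M\otimes L_\zeta,M)\subseteq\V_\fg(M\otimes L_\zeta)$ from Theorem~\ref{T:SP}(b). One step needs an extra sentence: the long exact sequence only shows that $\cdot\zeta$ is surjective onto $\Ext^{\ge n}(M,M)_{\mathfrak m}$, not onto all of $\Ext^\bullet(M,M)_{\mathfrak m}$. To get surjectivity in low degrees you must use that (away from the origin) some positive-degree element $s\notin\mathfrak m$ becomes a unit in the localisation, so every class can be shifted into degree $\ge n$; the paper makes exactly this move explicit (``multiply $z$ by a high enough power of $b$''). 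With that sentence added, your Nakayama step is valid.
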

\begin{proof}The proof in \cite[Proposition 3]{PW} works in our setting.  We include the details for the  convenience of the reader.
We first show that $\V_\fg(M) \cap \mathcal{Z}(\zeta)\subseteq \V_\fg(M\otimes L_{\zeta} )$.  By Theorem \ref{T:SP}(c), we have
$$\V_\fg(M)= \cup_{S \in \Irr(\fu(\fg))}\V_\fg(M, S) \ \  \text{and} \ \ \V_\fg(M\otimes L_{\zeta})= \cup_{S \in \Irr(\fu(\fg))}\V_\fg(M\otimes L_{\zeta}, S). $$
Therefore it is enough to show that
$$\V_\fg(M, S)\cap \mathcal{Z}(\zeta) \subseteq V_\fg(M\otimes L_{\zeta}, S) $$
for any irreducible $\fu(\fg)$ supermodule $S$.  Let $\fm$ be a maximal ideal in $\V_\fg(M, S)\cap \mathcal{Z}(\zeta)$. Then $I(M,S)\subseteq \fm$ and $(\zeta) \subseteq \fm$. Thus $\fm$ contains the ideal generated by $I(M,S)$ and $\zeta$. We would like to show that $\fm \in \V_{\fg}(M\otimes L_{\zeta} ,S)$, that is $I(M\otimes L_{\zeta} ,S)\subseteq \fm$.  Suppose that $I(M\otimes L_{\zeta} ,S)\nsubseteq \fm$. Then (\ref{E:SI}) implies that $\Ext_{\fu(\fg)}^\bullet(M\otimes L_{\zeta} ,S)_{\fm} = 0$. By first tensoring the short exact sequence
$$0\longrightarrow L_{\zeta}\longrightarrow \Omega^n(k)\longrightarrow k \longrightarrow 0$$
with $M$ and then applying $\Ext_{\fu(\fg)}(-, S)$, we get a long exact sequence

$$\cdots \longrightarrow \Ext_{\fu(\fg)}^r(M, S) \longrightarrow  \Ext_{\fu(\fg)}^r(\Omega^n(k)\otimes M, S)\ \longrightarrow  \Ext_{\fu(\fg)}^r(M\otimes L_{\zeta}, S) $$ $$\longrightarrow \Ext_{\fu(\fg)}^{r+1}(M, S)\ \longrightarrow \Ext_{\fu(\fg)}^{r+1}(\Omega^n(k)\otimes M, S)\ \longrightarrow  \Ext_{\fu(\fg)}^{r+1}(M\otimes L_{\zeta} \longrightarrow \cdots$$

Using (\ref{E:degreeshift})  the long exact sequence above  can be written as follows:
$$\cdots \longrightarrow \Ext_{\fu(\fg)}^r(M, S) \stackrel{\zeta} \longrightarrow \Ext_{\fu(\fg)}^{n+r}(M, S)\stackrel{\psi} \longrightarrow  \Ext_{\fu(\fg)}^r(M\otimes L_{\zeta}, S)\longrightarrow \Ext_{\fu(\fg)}^{r+1}(M, S)\longrightarrow  \cdots$$

where the map  $\zeta  : \Ext_{\fu(\fg)}^r(M, S)\longrightarrow \Ext_{\fu(\fg)}^{n+r}(M, S)$ is just the action of
$\zeta \in \HH^{n}(\fu(\fg),k)$ on $ \Ext_{\fu(\fg)}^r(M, S)$.  All the maps in the sequence above are $\HH^{\ev}(\fu(\fg),k)$-supermodule homomorphisms. Let $z \in   \Ext_{\fu(\fg)}^{n+r}(M, S)$. Then $\psi(z) \in\Ext_{\fu(\fg)}^r(M\otimes L_{\zeta}, S)$. Since $ \Ext_{\fu(\fg)}^\bullet(M\otimes L_{\zeta}, S)_{\fm} = 0$, there exists a homogeneous element $a \notin \fm$ such that $\psi(az) = a\psi(z) =0$. Since the long cohomology sequence is an exact sequence $az = \zeta y$ for
 $y \in \Ext_{\fu(\fg)}^{r+\deg (a)}(M, S)$. Thus $z = \zeta a^{-1}y$. This implies that
 $$\Ext_{\fu(\fg)}^i(M, S)_{\fm} = \zeta \Ext_{\fu(\fg)}^i(M, S)_{\fm} $$
 for all $i>n$. Assume $z \in \Ext_{\fu(\fg)}^i(M, S)_{\fm}$ for $i \leq n$.  Let $b \notin \fm$.  One can multiply $z$ by a high enough power of $b$
 so that $\deg(b^mz) > n$. Then $b^mz \in \Ext_{\fu(\fg)}^\bullet(M\otimes L_{\zeta}, S)_{\fm}$ and hence $z \in \Ext_{\fu(\fg)}^\bullet(M\otimes L_{\zeta}, S)_{\fm}$
 as $b$ is invertible in $\Ext_{\fu(\fg)}^\bullet(M\otimes L_{\zeta}, S)_{\fm}$.

 Since $\zeta \in \fm$, and $\Ext_{\fu(\fg)}^\bullet(M, S)$ is finitely generated over $\HH^{\ev}(\fu(\fg), k)$ by Theorem \ref{T:finiteg}, Nakayama's Lemma
 implies that $\Ext_{\fu(\fg)}^\bullet(M, S)_{\fm}=0$. This contradicts the assumption  $I(M, S) \subset \fm$. We conclude that $I(M, S) \subset \fm$ and hence
 $\V_\fg(M, S)\cap \mathcal{Z}(\zeta) \subseteq V_\fg(M\otimes L_{\zeta}, S) $.

To prove the other containment $\V_\fg(M\otimes L_{\zeta} )\subseteq \V_\fg(M) \cap \mathcal{Z}(\zeta)$,  by Theorem \ref{T:SP} (e) it is enough to show that
$\V_{\fg}(L_{\zeta} )\subseteq  \mathcal{Z}(\zeta).$  By Theorem \ref{T:SP}(c) showing that $\V_{\fg}(L_{\zeta},S )\subseteq  \mathcal{Z}(\zeta)$
  for any irreducible $\fu(\fg)$-supermodule $S$ will suffice. Let $\mathfrak{m}$ be a maximal ideal of  $\HH^{\ev}(\fu(\fg), k)$ for which $\zeta \notin \mathfrak{m}$.
   Then the action of $\zeta$ induces an isomorphism on the localized ring $\Ext_{\fu(\fg)}^\bullet(k, S)_{\fm}$ as $\zeta$ is an invertible element of $\HH^{\ev}(\fu(\fg), k)_{\fm}$. Since localization is an exact functor the short exact sequence which defines the Carlson supermodule $L_{\zeta}$ implies that
   $\Ext_{\fu(\fg)}^\bullet(k, S)_{\fm}$ is the kernel of the isomorphism
$$\Ext_{\fu(\fg)}^{\bullet}(k, S)_{\fm} \longrightarrow  \Ext_{\fu(\fg)}^{\bullet+n}(k, S)$$

induced by the action of $\zeta$ on $\Ext_{\fu(\fg)}^\bullet(k, S)_{\fm}$ and thus $\Ext_{\fu(\fg)}^\bullet(k, S)_{\fm} =0 $. Now  by ( \ref{E:SI}), we have
 $\V_{\fg}(L_{\zeta},S )\subseteq  \mathcal{Z}(\zeta)$.
\end{proof}

\begin{lem}\label{C:zeta}
Let $\zeta, \zeta_1,\dots, \zeta_n \in \HH^{\ev}(\fu(\fg), k)$ be
homogeneous elements with corresponding Carlson supermodules
$L_{\zeta_1}, \dots, L_{\zeta_n}$. Then
\begin{itemize}
\item[(a)] $\V_\fg(L_{\zeta}) = \mathcal{Z}(\zeta )$
\item[(b)] $\V_\fg(L_{\zeta_{1}} \otimes \dots \otimes L_{\zeta_{n}})= \V_{\fg}(L_{\zeta_{1}}) \cap \dots \cap \V_{\fg}(L_{\zeta_{n}})$

\end{itemize}
\end{lem}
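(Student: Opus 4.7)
The plan is to deduce both parts of the lemma as essentially formal consequences of Proposition \ref{T:Mzeta} (the rank-variety formula $\V_\fg(M\otimes L_\zeta) = \V_\fg(M) \cap \mathcal{Z}(\zeta)$), together with the basic fact that $\V_\fg(k)$ is the full ambient variety $\MaxSpec(\HH^{\ev}(\fu(\fg),k))$. There is no real new content here beyond careful bookkeeping and induction.

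For part (a), I would specialize $M = k$ in Proposition \ref{T:Mzeta}. Using the canonical isomorphism $k \otimes L_\zeta \cong L_\zeta$ of $\fu(\fg)$-supermodules, one obtains
\[
\V_\fg(L_\zeta) \;=\; \V_\fg(k \otimes L_\zeta) \;=\; \V_\fg(k) \cap \mathcal{Z}(\zeta).
\]
So the only thing that needs a brief remark is $\V_\fg(k) = \MaxSpec(\HH^{\ev}(\fu(\fg),k))$; this holds because the action of $\HH^{\ev}(\fu(\fg),k)$ on $\HH^\bullet(\fu(\fg), k^\ast \otimes k) \cong \HH^\bullet(\fu(\fg), k)$ is just multiplication in the cohomology ring, so the annihilator ideal $I(k,k)$ is zero (it contains no nonzero element, since the identity $1 \in \HH^0$ is not annihilated). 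Hence $\V_\fg(k) \cap \mathcal{Z}(\zeta) = \mathcal{Z}(\zeta)$, yielding (a).

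For part (b), I would proceed by induction on $n$. The base case $n=1$ is precisely part (a). For the inductive step, set $M = L_{\zeta_1} \otimes \dots \otimes L_{\zeta_{n-1}}$ and apply Proposition \ref{T:Mzeta} with $\zeta = \zeta_n$:
\[
\V_\fg(L_{\zeta_1} \otimes \dots \otimes L_{\zeta_n}) \;=\; \V_\fg(M \otimes L_{\zeta_n}) \;=\; \V_\fg(M) \cap \mathcal{Z}(\zeta_n).
\]
The induction hypothesis rewrites $\V_\fg(M)$ as $\V_\fg(L_{\zeta_1}) \cap \dots \cap \V_\fg(L_{\zeta_{n-1}})$, and part (a) identifies $\mathcal{Z}(\zeta_n) = \V_\fg(L_{\zeta_n})$, giving the desired equality.

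The only conceptual step that requires a bit of care is the identification $\V_\fg(k) = \MaxSpec(\HH^{\ev}(\fu(\fg),k))$ in part (a); after that, both statements are formal. No genuine obstacle is expected, since Proposition \ref{T:Mzeta} does all the heavy lifting via the long exact sequence arising from the defining sequence of the Carlson supermodule.
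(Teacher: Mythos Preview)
Your proof is correct and follows essentially the same approach as the paper: part (a) is obtained by specializing $M=k$ in Proposition~\ref{T:Mzeta}, and part (b) by iterating that proposition together with part (a). The only addition you provide is the explicit justification that $\V_\fg(k)$ is the full ambient variety, which the paper leaves implicit.
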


\begin{proof}
\begin{itemize}
\item[(a)] By Theorem \ref{T:Mzeta} $$\V_\fg(L_{\zeta})=\V_\fg(k\otimes L_{\zeta} ) = \V_\fg(k) \cap \mathcal{Z}(\zeta)=  \mathcal{Z}(\zeta )$$
\item[(b)] This follows from successively applying Theorem \ref{T:Mzeta} and part (a).
\end{itemize}
\end{proof}



\subsection{} We are now prepared to prove the realization theorem.
\begin{thm} Let  $X$ be a conical subvariety of $\V_{\fg}(k).$ Then there exists a finite dimensional $\fg$-supermodule $M$  such that
$$ \V_\fg(M)=X.$$
\end{thm}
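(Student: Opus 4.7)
The plan is to follow the classical Carlson strategy: realize $X$ as the common zero locus of finitely many homogeneous cohomology classes, then take the tensor product of the corresponding Carlson supermodules, and apply Lemma \ref{C:zeta}. The main preliminary observation is that, because $\HH^{\ev}(\fu(\fg),k)$ is finitely generated as a $k$-algebra by Theorem \ref{T:finiteg}(a) and $X$ is conical, $X$ is cut out by a homogeneous ideal in $\HH^{\ev}(\fu(\fg),k)$. Conicality means $X$ is stable under the $k^{\times}$-scaling action coming from the grading, so the ideal $I(X) \subseteq \HH^{\ev}(\fu(\fg),k)$ of functions vanishing on $X$ is generated by finitely many homogeneous elements of strictly positive degree (the zero-degree part only contributes when $X$ is empty, which is not the case since $0 \in X$).

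First I would fix homogeneous generators $\zeta_{1},\dots,\zeta_{n} \in \HH^{\ev}(\fu(\fg),k)$, each of strictly positive degree, such that $X = \mathcal{Z}(\zeta_{1},\dots,\zeta_{n}) = \bigcap_{i=1}^{n} \mathcal{Z}(\zeta_{i})$. For each $\zeta_{i}$ form the Carlson supermodule $L_{\zeta_{i}}$ as in the short exact sequence defining these objects just before Lemma \ref{C:zeta}, and set
\[
M \;=\; L_{\zeta_{1}} \otimes L_{\zeta_{2}} \otimes \cdots \otimes L_{\zeta_{n}}.
\]
Then $M$ is finite dimensional, since each $L_{\zeta_{i}}$ is a submodule of the finite dimensional syzygy $\Omega^{\deg \zeta_{i}}(k)$.

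Next I would apply Lemma \ref{C:zeta}(b) to this tensor product together with Lemma \ref{C:zeta}(a). This yields
\[
\V_{\fg}(M) \;=\; \V_{\fg}(L_{\zeta_{1}}) \cap \cdots \cap \V_{\fg}(L_{\zeta_{n}}) \;=\; \mathcal{Z}(\zeta_{1}) \cap \cdots \cap \mathcal{Z}(\zeta_{n}) \;=\; X,
\]
which is the desired equality. Strictly speaking Lemma \ref{C:zeta}(b) is stated for an iterated tensor product, but it is proved by an obvious induction from Theorem \ref{T:Mzeta}, so no new input is needed.

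The only places where something could go wrong are the two bookkeeping points above: that a conical closed subvariety of $\V_{\fg}(k)$ is the zero set of homogeneous elements of $\HH^{\ev}(\fu(\fg),k)$, and that those generators may be chosen in positive degree (so that the Carlson construction applies). Both are standard once one notes that the $\mathbb{Z}_{\geq 0}$-grading on $\HH^{\ev}(\fu(\fg),k)$ induces the cone structure on $\V_{\fg}(k)$, with vertex corresponding to the irrelevant ideal. There is no deeper obstacle; all of the real work has already been done in Theorem \ref{T:finiteg}, Theorem \ref{T:Mzeta} and Lemma \ref{C:zeta}.
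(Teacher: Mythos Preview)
Your proposal is correct and follows essentially the same approach as the paper: write $X=\mathcal{Z}(\zeta_1)\cap\cdots\cap\mathcal{Z}(\zeta_n)$ for homogeneous $\zeta_i$, set $M=L_{\zeta_1}\otimes\cdots\otimes L_{\zeta_n}$, and invoke Lemma~\ref{C:zeta}. If anything, you supply more justification than the paper does for why the $\zeta_i$ can be chosen homogeneous of positive degree and why $M$ is finite dimensional.
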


\begin{proof}

Let $J=(\zeta _1, \dotsc , \zeta _n) \subseteq \HH^{\ev}(\fu(\fg), k)$ be the homogeneous ideal which defines the homogeneous variety $X$. That is,$$X=\mathcal{Z}(\zeta _1)\cap  \dotsb \cap \mathcal{Z}(\zeta _n).$$  Let $M=L_{\zeta _1}\otimes \dotsb \otimes L_{\zeta _n}$.
Applying Lemma \ref{C:zeta} one has
\begin{align*}
\V_{\fg}(M) &= \V_{\fg}(L_{\zeta _1}\otimes \dotsb \otimes L_{\zeta _n})\\
  &=\V_{\fg }(L_{\zeta_{1}}) \cap \dots \cap \V_{\fg}(L_{\zeta_{n}} )\\
  &= \mathcal{Z}(\zeta _1)\cap  \dotsb \cap \mathcal{Z}(\zeta _n)=X
  \end{align*}\qedhere
\end{proof}

\subsection{}Let $V= \bigoplus_{n=0}^\infty V_n$ be a graded vector space with finite-dimensional homogeneous components. The \emph{rate of growth} $r(V)$ of $V$ is defined to be the smallest positive integer $c$ such that $\dim V_n \leq Kn^{c-1}$ for some constant $K$ and all $n = 0, 1, \dots$ If no such $c$ exists, set $r(V)= \infty$.
For example, the rate of the growth of a polynomial $k[x_1, \dots, x_s]$ ring with $s$ variables is $s$. Therefore  a finitely generated $k$-algebra of Krull dimension $s$ has rate of growth equal to $s$.

The \emph{complexity} of an $\fu(\fg)$-supermodule $M$, denoted by $\cx_{\fu(\fg)}(M)$, is the rate of growth of a minimal projective resolution $P_{\bullet}$ of $M$.


We have:
\begin{prop} Let $M$ be a finite dimensional $\fu(\fg)$-supermodule. Then
$$\cx_{\fu(\fg)}(M) = \dim \V_{\fg}(M).$$
\end{prop}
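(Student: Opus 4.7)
The plan is to establish the two inequalities $\cx_{\fu(\fg)}(M) \leq \dim \V_{\fg}(M)$ and $\dim \V_{\fg}(M) \leq \cx_{\fu(\fg)}(M)$ separately, using the standard principle that, for a finitely generated graded module $V$ over a finitely generated commutative graded $k$-algebra $R$, the rate of growth $r(V)$ equals the Krull dimension of $R/\Ann_R(V)$. Combined with the finite generation results of Theorem \ref{T:finiteg}, this identifies rates of growth of Ext modules with dimensions of the associated support varieties.

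For the inequality $\cx_{\fu(\fg)}(M) \leq \dim \V_{\fg}(M)$, I would analyze the minimal projective resolution $P_\bullet \to M$ term by term. Since $\fu(\fg)$ is finite dimensional, there are only finitely many isomorphism classes of irreducible supermodules $S$, and each $P_n$ is a direct sum of projective covers $P(S)$ whose multiplicity equals $\dim \Hom_{\fu(\fg)}(\Omega^n(M), S) = \dim \Ext^n_{\fu(\fg)}(M, S)$. Thus $\dim P_n \leq C \sum_S \dim \Ext^n_{\fu(\fg)}(M, S)$ for a constant $C$, so $\cx_{\fu(\fg)}(M) \leq \max_S r\bigl(\Ext^\bullet_{\fu(\fg)}(M, S)\bigr)$. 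By Theorem \ref{T:finiteg}, each $\Ext^\bullet_{\fu(\fg)}(M, S) \cong \HH^\bullet(\fu(\fg), M^\ast \otimes S)$ is finitely generated over $\HH^\bullet(\fu(\fg), k)$, and its annihilator contains $I(M, S)$, so its support is contained in $\V_\fg(M, S) \subseteq \V_\fg(M)$ by Theorem \ref{T:SP}(b). Applying the rate-of-growth/Krull-dimension principle yields $r(\Ext^\bullet_{\fu(\fg)}(M, S)) \leq \dim \V_\fg(M)$, giving the desired bound.

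For the reverse inequality, I would apply the same principle to $V = \Ext^\bullet_{\fu(\fg)}(M, M)$. This is finitely generated over $\HH^\bullet(\fu(\fg), k)$ by Theorem \ref{T:finiteg}, and by definition its annihilator is precisely $I(M)$, so its support as a module over $\HH^{\ev}(\fu(\fg), k)$ is exactly $\V_\fg(M)$; hence $r(\Ext^\bullet_{\fu(\fg)}(M, M)) = \dim \V_\fg(M)$. On the other hand, $\Ext^n_{\fu(\fg)}(M, M)$ is a subquotient of $\Hom_{\fu(\fg)}(P_n, M)$, so $\dim \Ext^n_{\fu(\fg)}(M, M) \leq \dim P_n \cdot \dim M$, which forces $r(\Ext^\bullet_{\fu(\fg)}(M, M)) \leq \cx_{\fu(\fg)}(M)$. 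Combining the two directions gives the equality.

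The only nontrivial ingredient that is not made fully explicit earlier in the paper is the rate-of-growth/Krull-dimension equivalence for finitely generated graded modules over a finitely generated commutative graded $k$-algebra; this is the main step requiring care, but it follows by a standard Noether normalization argument together with the observation that the graded Hilbert series of a finitely generated graded module has polynomial growth of degree equal to the Krull dimension of the support, exactly as in the classical finite-group setting (cf.\ the treatment in \cite{Ben2}). Once that lemma is in hand, the two inequalities above are essentially immediate from Theorems \ref{T:finiteg} and \ref{T:SP}.
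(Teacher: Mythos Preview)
Your proposal is correct and follows essentially the same approach as the paper: both relate $\cx_{\fu(\fg)}(M)=r(P_\bullet)$ to the rates of growth of $\Ext^\bullet_{\fu(\fg)}(M,S)$ and $\Ext^\bullet_{\fu(\fg)}(M,M)$ via the minimal projective resolution, and then invoke the rate-of-growth/Krull-dimension principle together with Theorem~\ref{T:finiteg}. The only cosmetic difference is that for the bound $\max_S r(\Ext^\bullet(M,S))\leq \dim\V_\fg(M)$ the paper routes through the inequality $r(\Ext^\bullet(M,S))\leq r(\Ext^\bullet(M,M))$ (using finite generation of $\Ext^\bullet(M,S)$ over $\Ext^\bullet(M,M)$), whereas you go directly via $\V_\fg(M,S)\subseteq\V_\fg(M)$ from Theorem~\ref{T:SP}(b).
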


\begin{proof} We compute 
$$\dim \V_{\fg}(M)= \dim (\HH^{\ev}(\fu(\fg), k)/I(M))=r(\HH^{\ev}(\fu(\fg), k)/I(M))= r(\Ext^{\bullet}_{\fu(\fg)}(M,M)).$$
Thus it is enough to show that $\cx_{\fu(\fg)}(M)=r(\Ext^{\bullet}_{\fu(\fg)}(M,M)).$
Let $$P_{\bullet} : \cdots \longrightarrow P_n\longrightarrow \cdots \longrightarrow P_0\longrightarrow M$$
be a minimal projective resolution of $M$. Then the multiplicity of the projective cover $P(S)$ of a irreducible $\fu(\fg)$-supermodule $S$  as  a direct summand of
$P_n$ is equal to
\begin{equation}\label{E:1}
\dim \Hom_{\fu(\fg)}(P_n, S)
\end{equation}

Since the resolution is minimal and $S$ is irreducible all maps are zero, i.e., every homomorphism $P_n \longrightarrow S$ is a cocycle and every coboundary zero. Thus,

\begin{equation}\label{E:2}
\Hom_{\fu(\fg)}(P_n, S) \cong \Ext_{\fu(\fg)}^n(M, S)
\end{equation}
Combining (\ref{E:1}) and (\ref{E:2}) we have,
\begin{align}\label{E:3}
\dim P_n &  =\sum_{S \in \Irr (\fu(\fg))}\dim P(S) . \dim \Ext_{\fu(\fg)}^n(M, S)
\end{align}

where $\Irr (\fu(\fg))$ denotes the set of all irreducible $\fu(\fg)$-supermodules.

From (\ref{E:3}) one easily observes that
\begin{equation}\label{E:4}
r(P_{\bullet})\leq \max \{r(\Ext_{\fu(\fg)}^\bullet(M, S)) \mid S \in \Irr (\fu(\fg)) \}.
\end{equation}
For any irreducible supermodule $S$ since $\Ext^{\bullet}_{\fu(\fg)}(M,S)$ is finitely generated as a supermodule over $\Ext^{\bullet}_{\fu(\fg)}(M,M)$, we have
\begin{equation}\label{E:5}
\max \{r(\Ext_{\fu(\fg)}^\bullet(M, S)) \mid S \in \Irr (\fu(\fg)) \} \leq r(\Ext_{\fu(\fg)}^\bullet(M, M)).
\end{equation}
Since $\Ext_{\fu(\fg)}^n(M, M)$ is a subquotient of $\Hom_{\fu(\fg)}(P_n, M)$, and this is a subspace of $\Hom_k(P_n, M)$, we have
\begin{equation*}
\dim \Ext_{\fu(\fg)}^n(M, M) \leq \dim P_n . \dim M
\end{equation*}
for every non-negative integer $n$. This implies that

\begin{equation}\label{E:6}
r(\Ext_{\fu(\fg)}^\bullet(M, M))\leq r(P_{\bullet})
\end{equation}

Putting together  (\ref{E:4}), (\ref{E:5}), and(\ref{E:6})  we have
$$r(P_{\bullet})\leq \max \{r(\Ext_{\fu(\fg)}^\bullet(M, S)) \mid S \in \Irr (\fu(\fg)) \} \leq r(\Ext_{\fu(\fg)}^\bullet(M, M)) \leq r(P_{\bullet}).$$

Therefore all the inequalities above are equalities.

Since $\V_\fg(M)= \cup_{S \in \Irr(\fu(\fg))}\V_\fg(M, S)=\cup_{S \in \Irr(\fu(\fg))}\V_\fg(S, M)$ by Theorem \ref{T:SP}(c),
$$\dim \V_\fg(M) = \max (\dim \V_\fg(M, S)).$$
From the definition of the support varieties it is clear that $\dim \V_\fg(M, S)$ is equal to the Krull dimension of
$\HH^{\ev}(\fu(\fg), k)/I(M,S)$.  Because $\HH^{\bullet}(\fu(\fg), M\otimes S)$ is a finitely generated and faithful module  as a $\HH^{\ev}(\fu(\fg), k)/I(M,S)$-module,the Krull dimension of $ \HH^{\ev}(\fu(\fg), k)/I(M,S)$ is equal to  the rate of growth of the
$$\bigoplus_{n\geq 0}\HH^{n}(\fu(\fg), k)/I(M, N).$$

 Now the  statement of the Theorem follows from the equality $\cx_{\fu(\fg)}(M)=r(P_{\bullet})$.
\end{proof}

\textit{Acknowledgements} The author would like to thank his Ph.D advisor  Daniel Nakano for suggesting the problem and advice throughout the years.  The author would like to thank Jonathan Kujawa,  Weiqiang Wang, and  Lei Zhao for helpful conversations. The author also would like to thank Gizem Karaali for comments and corrections on an earlier draft of this paper.

\end{document}